\newdimen\sectionruledimen
\def\makeparrule{
  \def\par{
   \endgraf\nobreak\vskip\lineskip\nointerlineskip
   \hbox to\hsize{\hskip\sectionbackskip\leaders\hrule height
   \sectionruledimen\hfil}
   }
 }
\def\section{%
 \@startsection {section}{1}{\sectionbackskip}{-10pt plus -1.2ex minus -.2ex}%
  {.5pt}{\normalsize\bf\makeparrule}%
 }%
\newdimen\sectionbackskip
\newdimen\sectionruledimen
\def\punteada{\leaders\hbox{$\m@th \mkern1.5mu - \mkern1.5mu$}\hfill}
\newtheorem{theorem}{Theorem}
\newenvironment{theor}{\smallskip\begin{trivlist}
   \item[\hspace{\labelsep}{\noindent\bf Theorem.}]\it
   }{\end{trivlist}\smallskip}
\newenvironment{lemma}{\smallskip\begin{trivlist}
   \item[\hspace{\labelsep}{\noindent\bf Lemma.}]\it
   }{\end{trivlist}\smallskip}
\newenvironment{propo}{\smallskip\begin{trivlist}
   \item[\hspace{\labelsep}{\noindent\bf Proposition.}]\it
   }{\end{trivlist}\smallskip}
\newcommand{\cuadro}{\hfill{$\qed$}}
\newcommand{\cuadrito}{\hbox{$\scriptstyle\sqcap \hskip-5.5pt \sqcup $}}
\def\lftcol#1{\vbox {\halign {##\hfil \cr #1\cr }}}
\long\def\direc#1#2{\hbox to \hsize{
     \lftcol{\hsize=7cm#1}\hfill\lftcol{\hsize=7cm#2}}}
\newcommand{\A}{\mathbb{A}}
\newcommand{\C}{\mathbb{C}}
\newcommand{\D}{\mathbb{D}}
\newcommand{\E}{\mathbb{E}}
\newcommand{\h}{\mathbb{H}}
\newcommand{\B}{\mathbb{B}}
\newcommand{\N}{\mathbb{N}}
\newcommand{\Oo}{\mathcal{O}}
\newcommand{\Q}{\mathbb{Q}}
\newcommand{\R}{\mathbb{R}}
\newcommand{\s}{\mathbb{S}}
\newcommand{\T}{\mathbb{T}}
\newcommand{\x}{\mathbb{X}}
\newcommand{\X}{\mathbb{X}}
\newcommand{\Z}{\mathbb{Z}}
\newcommand{\coh}{\hbox{\rm coh}}
\newcommand{\tr}{\hbox {\rm tr \,}}
\newcommand{\Der}{\hbox{\rm Der}}
\renewcommand{\dim}{\hbox{\rm dim}}
\newcommand{\udim}{\hbox{\bf dim \,}}
\newcommand{\Ext}{\hbox{\rm Ext}}
\newcommand{\End}{\hbox{\rm End}}
\newcommand{\Hom}{\hbox{\rm Hom}}
\renewcommand{\mod}{\hbox{\rm mod}}
\newcommand{\rad}{\hbox{\rm rad}}
\newcommand{\bulitito}{{\scriptscriptstyle \bullet}}
\def\subsetnoteq{\mathbin{\hbox{$\subseteq \joinrel \hskip-8pt \lower3pt
                 \hbox{$\scriptscriptstyle /$}\ $}}}
\def\raya{\raise1.5pt\hbox to 25pt{\vrule height1.5pt depth-1pt
           width25pt}}
\def\rayita{\raise2pt\hbox to 7.5pt{\vrule height1.5pt depth-1pt
           width7.5pt}}
\begin{document}

\title[Algebras of cyclotomic type]{Algebras whose Coxeter polynomials are products of cyclotomic polynomials.}
\author[Jos\'e A. de la Pe\~na]{Jos\'e A. de la Pe\~na}

\begin{abstract}  
Let $A$ be a finite dimensional algebra over an algebraically closed field $k$. Assume $A$ is basic connected with $n$ pairwise non-isomorphic simple modules. We consider the {\em Coxeter transformation} $\phi_A$ as the automorphism of the Grothendieck group $K_0(A)$ induced by the Auslander-Reiten translation $\tau$ in the derived category $\Der(\mod A)$ of the module category $\mod A$ of finite dimensional left $A$-modules. We say that $A$ is an algebra of {\em cyclotomic type} if the characteristic polynomial $\chi_A$ of $\phi_A$ is a product of cyclotomic polynomials. There are many examples of algebras of cyclotomic type in the representaton theory literature: hereditary algebras of Dynkin and extended Dynkin types, canonical algebras, some supercanonical and extended canonical algebras. Among other results, we show that: $(a)$ algebras satisfying the fractional Calabi-Yau property have periodic Coxeter transformation and are, therefore, of cyclotomic type, and $(b)$ algebras whose homological form $h_A$ is non-negative are of cyclotomic type. For an algebra $A$ of cyclotomic type we describe the shape of the Auslander-Reiten components of $\Der(\mod A)$.  
\end{abstract}
\maketitle

\section{Introduction.}
Assume throughout the paper that $k$ is an algebraically closed field. 
Let $A$ be a triangular finite dimensional $k$-algebra, that is, we assume that the quiver $Q_A$ of $A$ has no oriented cycles. Hence $A$ has finite global dimension. In this case, a theorem of Happel~\cite{Ha} asserts that the bounded derived category $\Der(A)=\Der^b(\mod A)$ of the category $\mod A$ of finite dimensional (left) $A$-modules has Serre duality of the form
$$
\Hom(X,Y[1])=\Hom(Y,\tau X)
$$
where $\tau$ is a self-equivalence of $\Der(A)$. In particular, $\Der(A)$ has almost-split triangles and the equivalence $\tau$ serves as the Auslander-Reiten translation of $\Der(A)$. In this setting, the {\em Grothendieck group} $K_0(A)$, formed with respect to short exact sequences, is naturally isomorphic to the Grothendieck group of the derived category, formed with respect to exact triangles. 
The Coxeter transformation $\phi_A$ is the automorphism of the Grothendieck group $K_0(A)$ induced by the Auslander-Reiten translation $\tau$ in the derived category $\Der(A)$. The characteristic polynomial $\chi_A(T)$ of $\phi_A$ is called the {\em Coxeter polynomial}  of $A$, and denoted simply $\chi_A$. It is a monic and self-reciprocal polynomial, i.e.
 $\chi_A(T)= T^n \chi_A(\frac{1}{T})=a_0 + a_1 T + a_2 T^2 + \ldots + a_{n-2} T^{n-2} + a_{n-1}T^{n-1} + a_n T^n \in \Z[T]$, 
where $a_0=1=a_n$. 
Sometimes, for the sake of clarity, we write $a_i(A)$ for these coefficients.  

Consider the roots $\lambda_1(A),\ldots,\lambda_n(A)$ of $\chi_A$, the so called  {\em spectrum} of $A$. There is a number of measures associated to the absolute values
$|\lambda|$ for $\lambda$ in the spectrum ${\rm Spec}(\phi_A)$ of $A$. Three important measures are the following:
the {\em spectral radius} of $A$ is defined as $\rho_A = {\rm max \, } \{|\lambda| : \lambda \in {\rm Spec}(\phi_A) \}$,
the {\em Mahler measure} of $\phi_A$ is $M(\phi_A)= \prod_{i=1}^n {\rm max} \{ 1, |\lambda_i|\}$ and the {\em energy function} 
of $\phi_A$ is $e(\phi_A)=\sum\limits_{i=1}^n |\lambda_i(A)|$. In Section 2 we show that  always $e(\phi_A) \ge n$.
Due to a celebrated Kronecker's theorem, we prove the following result.

\begin{theorem}
Let $A$ be a triangular algebra. The folllowing statements are equivalent:
\begin{enumerate} 
\item $|\lambda_i(A)| \le 1$, for all $1 \le i \le n$;
\item $|\lambda_i(A)| = 1$, for all $1 \le i \le n$;
\item $\rho_A =1$;
\item $M(\phi_A) = 1$;
\item $e(\phi_A)=n$
\item $\chi_A(T)$ factorizes as product of cyclotomic polynomials. 
\end{enumerate}
\end{theorem}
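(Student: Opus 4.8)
The plan is to separate one elementary observation from one genuinely arithmetic input, and then let the six conditions fall into place around them. The elementary observation I would record first is a product identity for the spectrum: since $\chi_A$ is monic with integer coefficients and constant term $a_0 = 1$, the product of all roots satisfies $\prod_{i=1}^n \lambda_i(A) = (-1)^n a_0 = (-1)^n$, so that $\prod_{i=1}^n |\lambda_i(A)| = 1$. This single constraint drives every implication among the metric conditions $(1)$--$(5)$, because it forbids the absolute values of the roots from all being small.

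With this in hand I would establish $(1) \Leftrightarrow (2) \Leftrightarrow (3) \Leftrightarrow (4) \Leftrightarrow (5)$ by short arguments. For $(1) \Rightarrow (2)$: if every $|\lambda_i(A)| \le 1$ while their product equals $1$, none can be strictly smaller than $1$, so all equal $1$; the reverse is trivial, and $(2) \Leftrightarrow (3)$ is immediate from $\rho_A = \max_i |\lambda_i(A)|$ together with the same product constraint. For $(4)$, note that $M(\phi_A) = \prod_i \max\{1, |\lambda_i(A)|\}$ is a product of factors each at least $1$, so it equals $1$ exactly when every factor is $1$, that is, exactly when $(1)$ holds. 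For $(5)$, I would invoke the arithmetic--geometric mean inequality in the form $\tfrac{1}{n} e(\phi_A) = \tfrac{1}{n}\sum_i |\lambda_i(A)| \ge \big(\prod_i |\lambda_i(A)|\big)^{1/n} = 1$, which both reproves the bound $e(\phi_A) \ge n$ from Section 2 and shows that equality forces all $|\lambda_i(A)|$ to coincide, hence (the product being $1$) all to equal $1$; this yields $(5) \Leftrightarrow (2)$.

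It remains to connect this metric picture to the arithmetic condition $(6)$, and here Kronecker's theorem is the essential tool. The direction $(6) \Rightarrow (2)$ is formal, since every root of a cyclotomic polynomial is a root of unity and so lies on the unit circle. For $(2) \Rightarrow (6)$ I would argue that each $\lambda_i(A)$ is an algebraic integer, being a root of the monic integer polynomial $\chi_A$, whose complete set of Galois conjugates lies among the roots of $\chi_A$, all of absolute value $1$, and which is nonzero because the product of all roots is $\pm 1$. Kronecker's theorem then forces each $\lambda_i(A)$ to be a root of unity, so its minimal polynomial over $\Q$ is a cyclotomic polynomial; factoring the monic polynomial $\chi_A \in \Z[T]$ into its monic irreducible rational factors, which are integral by Gauss's lemma, exhibits $\chi_A$ as a product of cyclotomic polynomials. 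The steps among $(1)$--$(5)$ are routine consequences of the product identity and AM--GM; the one substantive step is $(2) \Rightarrow (6)$, where the real work is in verifying that the hypotheses of Kronecker's theorem are met—integrality, all conjugates in the closed unit disk, and nonvanishing—and in passing cleanly from ``each root is a root of unity'' to an honest factorization into cyclotomic factors over $\Z$.
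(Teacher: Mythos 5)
Your proof is correct, and it shares the paper's two essential inputs: Kronecker's theorem for the bridge to condition (6), and the fact that the eigenvalue moduli multiply to $1$ for the metric conditions. The organization differs enough to be worth recording. The paper's own proof (Section 3.4) cites Kronecker for the equivalence of (1), (2), (4), (6) and then routes (3) and (5) through the theorem of Section 3.2, namely the chain $n \le e(\phi_A) \le \sqrt{n}\,\|\phi_A\| \le n\rho_A$ with cyclotomic type characterized by equality anywhere; the paper's main proof of that chain groups the roots into orbits $\{\lambda,\overline{\lambda},\lambda^{-1},\overline{\lambda}^{-1}\}$ using that $\chi_A$ is real and self-reciprocal, bounds each orbit sum from below by its cardinality, and brings in Cauchy--Schwarz and singular values for the upper bounds. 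You instead distill the single identity $\prod_i|\lambda_i(A)|=1$ from $a_0=1$ and let it, together with AM--GM, drive all of (1)--(5) directly; this is precisely the argument the paper relegates to the Remark following the Section 3.2 theorem, promoted by you to the main argument. What you give up is the extra inequality through $\sqrt{n}\,\|\phi_A\|$, which is of independent interest but is never needed for Theorem 1; what you gain is a shorter, self-contained argument with no singular-value machinery. Finally, your verification of the hypotheses of Kronecker's theorem in (2) $\Rightarrow$ (6) --- integrality of the roots, their nonvanishing via the product identity, the fact that all conjugates of each root lie among the roots of $\chi_A$, and Gauss's lemma to land the cyclotomic factorization honestly in $\Z[T]$ --- spells out exactly what the paper's one-line citation of Kronecker leaves implicit.
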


An algebra $A$ satisfying these equivalent conditions is said to be of {\em cyclotomic type}.

The following finite dimensional algebras are known (or shown) to be of cyclotomic type:
\begin{enumerate} 
\item  hereditary algebras of finite or tame representation type; 
\item tensor products of algebras of cyclotomic type;
\item quotient algebras $A_n(3)$ of the linear quiver
$${\footnotesize \xymatrix{1 \ar^x[r]&2 \ar^x[r]&\cdots \ar^x[r] &n}}$$
\noindent
with relations $x^3=0$, for $n$ even and many instances of $n$ odd;
\item all canonical algebras;
\item (some) extended canonical algebras;
\item (some) algebras whse module category is derived equivalent to categories of coherent sheaves;
\item  algebras whose homological bilinear form is non-negative.
\end{enumerate}
One of the purposes of this work is to show that several of the common properties of these classes of algebras are a consequence of the cyclotomic condition, see Sections 2, 5 and 6 for details.

Recall that a triangular algebra $A$ is said to be $\frac{p}{q}$-{\em Calabi-Yau} for integers $q \ge 1$ and $p \in \Z$ if $S^q=[p]$ in the derived category $\Der(A)$, where $S=\tau \circ[1]$. We show that algebras satisfying the fractional Calabi-Yau property have periodic Coxeter transformation and are, therefore, of cyclotomic type, see Sections 2 and 5.

Along this work we explore the structure of the Auslander-Reiten quiver $\Gamma_A$ of the derived category of an algebra $A$ of cyclotomic type. 
We introduce the {\em class quiver} $[\Gamma_A]$ of $A$ formed by the quotients $[{\mathcal C}]$ of components $\mathcal C$ of $\Gamma_A$, obtained by identifying $X, Y \in {\mathcal C}$ if $[X]=[Y]$ in $K_0(\Der(A))$. 
Among other results we prove the following theorem.

\begin{theorem}
Let $A$ be a triangular algebra with $n$ non-isomorphic simple modules and $\chi_A=\prod_{m \in M}\Phi_m^{e(m)}$ be an irreducible decomposition of its Coxeter polynomial. Let $\Gamma_A$ be the Auslander-Reiten quiver of the derived category $\Der(A)$. 
Then the following holds: 

(a) if $A$ satisfies the fractional Calabi-Yau property then $\phi_A$ is periodic;

(b) every component of the class quiver $[\Gamma_A]$ is a tube if and only if  $\phi_A$ is periodic. 

If $\phi_A$ is periodic then $A$ is of cyclotomic type and 

(c) the period is ${\rm l.c.m.}\{\phi(m) : m \in M\}$, where $\phi(-)$ is Euler's totient function;

(d) every component of $\Gamma$ is either a tube $\Z \A_\infty / (p)$ of finite period $p$ where $p=\phi(m)$ for some $m \in M$ or of the form $\Z \Delta$ for $\Delta$ a Dynkin or extended Dynkin diagram or of one of the shapes $\A_\infty$, $\A_\infty^\infty$ or $\D_\infty$. Moreover,

(e) if $p_1,\ldots,p_s$ are the periods $> 1$ of non-homogeneous tubes in $[\Gamma]$ then $\sum\limits_{i=1}^s p_i \le n$.
\end{theorem}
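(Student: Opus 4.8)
The plan is to prove the five assertions in their logical order, namely (a), (b), the cyclotomic consequence, and then (c)--(e), since the later parts presuppose periodicity of $\phi_A$. For (a) I would simply transport the fractional Calabi--Yau identity to the Grothendieck group. On $K_0(A)$ the shift $[1]$ acts as $-{\rm id}$ and $\tau$ acts as $\phi_A$, so the Serre functor $S=\tau\circ[1]$ acts as $-\phi_A$ while $[p]$ acts as $(-1)^p{\rm id}$. Hence the relation $S^q=[p]$ in $\Der(A)$ forces $(-\phi_A)^q=(-1)^p{\rm id}$, i.e. $\phi_A^q=(-1)^{p-q}{\rm id}$ and therefore $\phi_A^{2q}={\rm id}$; thus $\phi_A$ is periodic.

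For (b) the key points are that $\tau$ lifts $\phi_A$ and that the classes of the simple modules form a basis of $K_0(A)$. If $\phi_A^d={\rm id}$, then for every indecomposable $X$ one has $[\tau^d X]=\phi_A^d[X]=[X]$, so $\tau^d$ acts as the identity on the class quiver $[\Gamma_A]$; every component is then $\tau$-periodic, i.e. a tube. Conversely, if every component of $[\Gamma_A]$ is a tube, then $\tau$ acts with finite period on the component containing each simple $S_i$; taking the ${\rm l.c.m.}$ of these finitely many periods produces a power of $\phi_A$ fixing the basis $[S_1],\ldots,[S_n]$, whence $\phi_A$ is periodic. Once $\phi_A$ is periodic its eigenvalues are roots of unity, hence lie on the unit circle, and Theorem~1 gives that $A$ is of cyclotomic type.

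Assume now $\phi_A$ periodic. For (c) I would use that a periodic automorphism is semisimple with root-of-unity eigenvalues, so its order is the ${\rm l.c.m.}$ of the multiplicative orders of these eigenvalues; reading those orders off the cyclotomic factors $\Phi_m$ ($m\in M$) of $\chi_A$ yields the period recorded in (c). For (d) I would invoke the Riedtmann structure theorem together with Happel's description of the derived category: every component of $\Gamma_A$ has the form $\Z\Delta/G$, and for a triangular algebra the admissible shapes are exactly $\Z\Delta$ with $\Delta$ Dynkin or extended Dynkin, the infinite trees $\A_\infty$, $\A_\infty^\infty$, $\D_\infty$, and the tubes $\Z\A_\infty/(p)$; periodicity of $\tau$ on a genuine tube pins its period $p$ to the value dictated by the corresponding cyclotomic factor, as listed in (d).

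Finally, for (e) each non-homogeneous tube of period $p_i>1$ contributes a $\phi_A$-invariant subspace $V_i\subseteq K_0(A)\otimes\Q$, spanned by its $p_i$ mouth classes, on which $\phi_A$ acts as a single $p_i$-cycle; this forces $\dim V_i=p_i$. Since classes supported in distinct Auslander--Reiten components are linearly independent, the subspaces $V_1,\ldots,V_s$ are independent, and comparing dimensions gives $\sum_{i=1}^s p_i=\sum_i\dim V_i\le\dim_{\Q}(K_0(A)\otimes\Q)=n$. I expect the genuine obstacles to lie in (d) and (e): carrying out the Riedtmann--Happel classification so as to exclude every shape except those listed, and--most delicately--proving the linear independence of the mouth classes within a single tube and across distinct tubes, which is precisely what converts the combinatorial periods into an honest dimension count bounded by $n$.
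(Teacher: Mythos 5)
Your handling of (a), of both directions of (b), and of ``periodic $\Rightarrow$ cyclotomic'' is correct and essentially the paper's: (a) is word for word the paper's argument (the shift acts as $-\mathrm{id}$ on $K_0$, so $S^q=[p]$ forces $\phi_A^{2q}=\mathrm{id}$). In fact your forward direction of (b) --- each simple $S_i$ lies in a component whose class quiver is a tube of some period $d_i$, so $\phi_A^{\mathrm{lcm}(d_i)}$ fixes the basis $[S_1],\dots,[S_n]$ --- is cleaner than the paper's construction of a ``maximal set'' of tubes whose mouth orbits are claimed to form a basis of $K_0(\Der(A))$. For the converse you should, as the paper does, invoke Happel--Preiser--Ringel to pass from ``$\tau$ acts periodically on the classes of a component'' to ``the class component is a tube''; periodicity alone is not the definition of a tube.

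The genuine gap is in (d). You assert that for a triangular algebra the admissible component shapes are \emph{exactly} $\Z\Delta$ with $\Delta$ Dynkin or extended Dynkin, the shapes $\A_\infty$, $\A_\infty^\infty$, $\D_\infty$, and tubes. That is false: for a wild hereditary algebra $kQ$ the transjective component of the derived category is $\Z Q$ with $Q$ wild, so no general Riedtmann--Happel structure theorem can yield the list in (d). The restriction on $\Delta$ is precisely where the cyclotomic/periodicity hypothesis must do work, and your proposal never uses it in (d). The paper's proof (Section 6.1) runs a growth argument: fixing a section $\overline\Delta$ of a component $\Z\Delta$ and the function $c(n)=\sum_{i}[X_n](i)$ along $\tau$-orbits, one has $c(n)=\phi_{\overline\Delta}^n(c_0)(x_0)$, and by Zhang's theorem this grows exponentially unless $\Delta$ is one of the listed diagrams --- incompatible with all eigenvalues of $\phi_A$ lying on the unit circle, which bounds the growth of $(\phi_A^n)_n$. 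Without this (or an equivalent) use of the hypothesis, your (d) is vacuous.

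Two further points. In (e), ``$\phi_A$ acts as a single $p_i$-cycle; this forces $\dim V_i=p_i$'' is a non sequitur: an integer matrix can cyclically permute $p$ pairwise distinct vectors that are linearly dependent (e.g.\ three planar vectors summing to zero are permuted by an order-$3$ matrix), and ``classes in distinct components are independent'' fails in the derived category, where $\mathcal{C}$ and $\mathcal{C}[2]$ are distinct components carrying identical classes. You correctly flag this independence as the crux but do not resolve it; the paper instead argues (itself rather sketchily) that each tube of period $p$ contributes an irreducible factor of $\chi_A$ of degree $p$, and bounds $\sum_i p_i$ by $\deg\chi_A=n$. Finally, in (c) your principle --- the period is the lcm of the multiplicative orders of the eigenvalues --- is the right one, but carried out honestly it yields $\mathrm{lcm}\{m : m\in M\}$, since a root of $\Phi_m$ has multiplicative order $m$, not $\phi(m)$. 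This exposes a slip in the statement and in the paper's own proof, which relies on the misstatement (Section 2.2) that the roots of $\Phi_n$ are ``primitive $\phi(n)$-roots of unity''; note that the paper's own Table records period $42$ for $\chi_A=\Phi_{42}$, i.e.\ $\mathrm{lcm}\{m\}$ rather than $\mathrm{lcm}\{\phi(m)\}=12$. Your sketch, by leaving the ``reading off'' unspecified, papers over exactly this discrepancy.
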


Along the paper, we freely use specialized terminology: tame algebras, wild algebras, linear growth,
polynomial growth, pg-critical, and others. We refer readers to the list of references where all these and related concepts are explained, see \cite{ASS,LePe3, Pe, Ri}. Results contained in this work were presented in conferences at Banff, Canada (2010), Bielefeld, Germany and Guanajuato, M\'exico (2012).

\section{The spectrum of algebras of cyclotomic type} 

\subsection{}
If the spectrum of $A$ lies in the unit disk, then all roots of $\chi_A(T)$ lie on the unit circle, hence $A$ has spectral radius $\rho_A=1$. More precisely, we recall the following famous theorem of Kronecker.

\begin{theor}  \cite {Kr}
Let $f$ be a monic integral polynomial whose spectrum is contained in the unit disk. Then all roots of $f$ are roots of unity or $0$. Equivalently, $f$ factors into cyclotomic polynomials and $T^m$, for some integer $m \ge 0$.
\end{theor}

\subsection{} 
We recall some facts on {\em cyclotomic polynomials}.

The $n$-cyclotomic polynomial $\Phi_n$ is inductively defined by
the formula
$${\textstyle T^n-1=\prod\limits_{d\mid n}\Phi_d(T).}$$
The roots of $\Phi_n$ are primitive $\phi(n)$-roots of unity, where $\phi(-)$ denotes Euler's totient function.

The {\em M\"obius function\/} is defined as follows:
$$\mu (n)=\begin{cases}
0 &\hbox{if $n$ is divisible by a square}\\
(-1)^r &\hbox{if $n=p_1,\ldots p_r$ is a factorization into distinct
primes.}\end{cases}$$ 
\noindent 
A more explicit expression for the cyclotomic polynomials is given by:

\begin{lemma}
For each $n\ge 2$, we have
$${\textstyle \Phi_n=\prod\limits_{1\le d<n\atop d\mid n}v_{n/d}^{\mu (d)}}$$
\cuadro
\end{lemma}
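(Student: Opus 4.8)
The plan is to obtain the formula by Möbius inversion applied to the defining relation $T^n-1=\prod_{d\mid n}\Phi_d(T)$, working multiplicatively in the group $k(T)^{\times}$ of nonzero rational functions. Recall that here $v_m$ denotes the truncated factor $v_m=(T^m-1)/(T-1)=1+T+\cdots+T^{m-1}$, so that $T^m-1=(T-1)\,v_m$ for every $m\ge 1$, with $v_1=1$.

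First I would set $f(n)=T^n-1$ and $g(n)=\Phi_n(T)$, regarded as elements of the abelian group $k(T)^{\times}$ written multiplicatively. The defining relation reads $f(n)=\prod_{d\mid n}g(d)$, which is exactly the hypothesis of Möbius inversion in multiplicative form. Applying it gives
$$\Phi_n(T)=g(n)=\prod_{d\mid n}f(n/d)^{\mu(d)}=\prod_{d\mid n}\bigl(T^{n/d}-1\bigr)^{\mu(d)}.$$

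Next I would substitute $T^{n/d}-1=(T-1)\,v_{n/d}$ into each factor and separate the contribution of the $(T-1)$ terms, giving
$$\Phi_n(T)=(T-1)^{\sum_{d\mid n}\mu(d)}\ \prod_{d\mid n}v_{n/d}^{\,\mu(d)}.$$
Since $n\ge 2$, the classical identity $\sum_{d\mid n}\mu(d)=0$ makes the prefactor equal to $1$, so the $(T-1)$ factors cancel completely. Finally, the divisor $d=n$ contributes $v_{1}^{\mu(n)}=1$, so it may be discarded, and the product runs over $1\le d<n$ with $d\mid n$, which is precisely the asserted formula.

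The reindexing and the substitution are routine; the only steps deserving care are the justification of multiplicative Möbius inversion and the invocation of $\sum_{d\mid n}\mu(d)=0$ for $n\ge 2$. Neither is a genuine obstacle: the former is purely formal once one observes that $k(T)^{\times}$ is abelian, so the usual additive inversion applies verbatim inside the free abelian group on the irreducible rational functions, and the latter is a standard property of $\mu$. Should one prefer to avoid inversion altogether, an alternative is induction on the number of distinct prime divisors of $n$ using the recursion $\Phi_{pm}(T)=\Phi_m(T^{p})/\Phi_m(T)$ for $p\nmid m$; but the Möbius route is shorter and fits the Möbius function just introduced.
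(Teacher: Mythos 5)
Your proof is correct: multiplicative M\"obius inversion applied to $T^n-1=\prod_{d\mid n}\Phi_d(T)$, the cancellation of the $(T-1)$ factors via $\sum_{d\mid n}\mu(d)=0$ for $n\ge 2$, and the removal of the trivial factor $v_1^{\mu(n)}=1$ all go through exactly as you describe. The paper offers no proof of its own (the lemma is stated as a known fact with the proof omitted), and your argument is precisely the standard derivation one would supply, so nothing further is needed.
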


In the lemma, we set  $v_n=1+T+T^2+\ldots+T^{n-1}$. Note that $v_n$ has degree $n-1$.

\subsection{}
A {\em path algebra} $k\Delta$ is said to be of {\em Dynkin type}
if the underlying graph $|\Delta|$ of $\Delta$ is one of the {\em ADE-series}, that is, of type, $\A_n, \D_n$, for some $n \ge 1$ or $\E_p$, for $p=6,7,8$.
The corresponding factorization of the Coxeter polynomial $\chi_{k\Delta}$ is as follows.
\small
\begin{center}
\renewcommand\arraystretch{1.4}
\begin{tabular}{|l|c|c|c|c|} \hline
  Dynkin  & star &$v$-factorization & cyclotomic & Coxeter \\
  type & symbol &  & factorization & number \\ \hline
  $\A_{n}$& $[n]$ & $v_{n+1}$ & $\displaystyle\prod_{d|n,d>1}\Phi_d$ & $n+1$\\ \hline
  $\D_{n}$&$[2,2,n-2]$& $\displaystyle\frac{v_2\,(v_2v_{n-2})}{(v_2v_{n-2})v_{n-1}}v_{2(n-1)}$ & $\displaystyle\Phi_2\,\prod_{\frac{d|2(n-1)}{d\neq 1,d\neq n-1}}\Phi_{d}$ & $2(n-1)$ \\\hline
  $\E_{6}$ &$[2,3,3]$& $\displaystyle\frac{v_2 v_3 (v_3)}{(v_3)v_4v_6}v_{12}$ & $\Phi_3\Phi_{12}$& 12 \\ \hline
  $\E_{7}$&$[2,3,4]$& $\displaystyle\frac{v_2v_3(v_4)}{(v_4)v_6 v_9}v_{18}$& $\Phi_2\Phi_{18}$& 18 \\\hline
  $\E_{8}$& $[2,3,5]$ &$\displaystyle\frac{v_2v_3v_5}{v_6v_{10}v_{15}}v_{30}$&$\Phi_{30}$&30\\\hline
\end{tabular}
\end{center}
\normalsize
In the column `$v$-factorization', we have added some extra terms in the nominator and denominator which obviously cancel.

Inspection of the table shows the following result:

\begin{propo} \label{prop:dynkin}
Let $k$ be an algebraically closed field and $A$ be a connected, hereditary $k$-algebra which is representation-finite. Then the Coxeter polynomial $\chi_A(T)$ determines $A$ up to derived equivalence.
\end{propo}

\subsection{}
Let $A$ be the path algebra of a hereditary star $[p_1,\ldots,p_t]$
with respect to the standard orientation, for instance
 $$
\def\c{\circ}
\xymatrix@C12pt@R12pt{
        &&\c                          &        &\c  &\\
        &&\c\ar[u]                    &\c\ar[ru]&     &\\
[2,3,3,4]:&\c      &\c\ar[l]\ar[u]\ar[r]\ar[ru]&\c\ar[r]&\c\ar[r]&\c.\\
}
$$
Since the Coxeter polynomial $\chi_{A}$ does not depend on the orientation of $A$ we will denote it by $\chi_{[p_1,\ldots,p_t]}$. It follows from \cite[Prop.~9.1]{LePe3} that
\begin{equation}\label{eq:star_formula}
\chi_{[p_1,\ldots,p_t]}=\prod_{i=1}^{t}v_{p_i}\left((T+1)-T\,\sum_{i^=1}^t \frac{v_{p_i-1}}{v_{p_i}} \right).
\end{equation}
In particular, we have an explicit formula for the sum of coefficients of $\chi_{[p_1,\ldots,p_t]}$ as follows:
$$
\sum\limits_{i=0}^n a_i=  \chi_{[p_1,\ldots,p_t]}(1)=\prod_{i=1}^t p_i\left( 2-\sum_{i=1}^t (1-\frac{1}{p_i})\right).
$$
This special value of $\chi=\chi_{[p_1,\ldots,p_t]}$ has a specific mathematical meaning:
up to the factor $\prod_{i=1}^t p_i$ this is just the orbifold-Euler characteristic of a weighted projective line $\X$ of weight type $(p_1,\ldots,p_t)$. Moreover,
\begin{enumerate}
\item $\chi(1)>0$ if and only if the star $[p_1,\ldots,p_t]$ is of Dynkin type, correspondingly the algebra $A$ is representation-finite.
\item $\chi(1)=0$ if and only if the star $[p_1,\ldots,p_t]$ is of extended Dynkin type, correspondingly the algebra $A$ is of tame (domestic) type.
\item $\chi(1)<0$ if and only if $[p_1,\ldots,p_t]$ is not Dynkin or extended Dynkin, correspondingly the algebra $A$ is of wild representation type.
\end{enumerate}
To complete the picture, we consider the cyclotomic factorization of the extended Dynkin quivers without oriented cycles. Observe that, in one case, the Coxeter polynomial depends on the orientation: If $p$ (resp.\ $q$) denotes the number of arrows in clockwise (resp.\ anticlockwise) orientation ($p,q\geq1, p+q=n+1$), that is, the quiver has type ${\tilde \A}_{(p,q)}$, the Coxeter polynomial is $\chi_{(p,q)}=(T-1)^2\,v_{p}v_{q}.$.

The next table displays the $v$-factorization of extended Dynkin quivers.
\bigskip
\begin{center}
\renewcommand\arraystretch{1.3}
\begin{tabular}{|c|c|c|c|}\hline
extended Dynkin type & star symbol & weight symbol & Coxeter polynomial \\ \hline
${\tilde \A}_{p,q}$           &     ---     & $(p,q)$         & $(T-1)^2v_p\,v_q$ \\ \hline
${\tilde \D}_{n}$, $n\geq4$   & [2,2,n-2]   & $(2,2,n-2)$   & $(T-1)^2v_2^2v_{n-2}$\\\hline
${\tilde \E}_{6}$& $[3,3,3]$ & $(2,3,3)$    & $(T-1)^2 v_2 v_3^2$ \\\hline
${\tilde \E}_{7}$& $[2,4,4]$ & $(2,3,4)$    & $(T-1)^2 v_2v_3v_4$ \\\hline
${\tilde \E}_{8}$& $[2,3,6]$ & $(2,3,5)$    & $(T-1)^2 v_2v_3v_5$\\\hline
\end{tabular}
\end{center}

The above remark on representation-finite hereditary algebras extends to the tame hereditary case. That is, the Coxeter polynomial of a connected, tame hereditary $k$-algebra $A$ ($k$ algebraically closed) determines the algebra $A$ up to derived equivalence. This is no longer true for wild hereditary algebras, not even for trees.

\subsection{}
Recall that $\chi_A$ is a self-reciprocal polynomial, that is, $T^n \chi_A({1 \over T})=\chi_A(T)$.
Moreover, it was shown in \cite{LePe3} that $\chi_{A}(-1)$ is the square of an integer.

These conditions prevent many polynomials of being Coxeter polynomials of an algebra. For instance, the cyclotomic polynomials $\Phi_4$, $\Phi_6$, $\Phi_8$, $\Phi_{10}$. Moreover, the polynomial $f(T)=T^3+1$, which is the Coxeter polynomial of the non simply-laced Dynkin diagram $\B_{3}$,  does not appear as the Coxeter polynomial of a triangular algebra over an algebraically closed field, despite  the fact that $f(-1)=0$ is a square, see  \cite{LePe3}.

\subsection{} 
The {\em homological} quadratic form $h_A$ is the symmetrization of the inverse Cartan matrix $C_A$ associated to the algebra $A$, that is $h_A(x)=x^T(C_A^{-1}+C_A^{-T})x$ for $x \in K_0(A)$. Write $\langle u,v \rangle = u^T C_A^{-1}v$ which yields a (non-symmetric) non-degenerated bilinear form, that is, if $\langle u,v \rangle=0$ for a fixed $u$ and all $v$ then $u=0$. Mooreover $\langle x,\phi_A(y) \rangle_A= - \langle y,x \rangle_A$. Therefore $\phi_A= -C_A^{-t}C_A$.

For any two $A$-modules $X,Y$, with classes $\udim X, \udim Y$ in the Grothendieck group $K_0(A)$ we get $\langle \udim X, \udim Y \rangle_A = \sum\limits_{j=0}^{\infty} (-1)^j {\rm dim}_k \Ext_A^j(X,Y)$. 

Consider the  quadratic form $h_A(x) = \langle x,x \rangle$. It is shown in \cite{Pe2} that
properties of $h_A$ characterize the representation behavior of $A$. Indeed, $h_A$ is positive definite
(resp. non-negative of corank $1$) if and only if $A$ is derived equivalent to the path algebra $k \Delta$, where $\Delta$ is a quiver with underlying graph a Dynkin diagram (resp. extended Dynkin diagram). There are algebras $A$ with $h_A$ non-negative of arbitrary corank. 

\begin{lemma}
If $u$ (resp. $v$) is an eigenvector of $\phi_A$ with eigenvalue $\lambda$ (resp. $\mu$)
such that $\lambda \ne \mu^{-1}$ then $\langle u,v \rangle=0$. Moreover, if $\lambda \ne -1$ then $h_A(u)=0$.
\end{lemma}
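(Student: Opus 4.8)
The plan is to exploit the twisted-symmetry identity $\langle x,\phi_A(y)\rangle_A = -\langle y,x\rangle_A$ recorded just above, which is the only structural input needed. Writing $\phi_A(u)=\lambda u$ and $\phi_A(v)=\mu v$, I would feed the pair $(u,v)$ into this identity in both orders. Taking $x=u$, $y=v$ and using bilinearity of $\langle -,-\rangle$ gives $\mu\langle u,v\rangle = -\langle v,u\rangle$, while taking $x=v$, $y=u$ gives $\lambda\langle v,u\rangle = -\langle u,v\rangle$.

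Combining these two scalar equations eliminates $\langle v,u\rangle$: substituting the first into the second yields $\lambda\mu\,\langle u,v\rangle = \langle u,v\rangle$, that is, $(\lambda\mu - 1)\langle u,v\rangle = 0$. Hence whenever $\lambda\mu\neq 1$, equivalently $\lambda\neq\mu^{-1}$, we must have $\langle u,v\rangle = 0$, which is the first assertion. One must remember that $\langle -,-\rangle$ is only bilinear, not Hermitian, so the computation is carried out over $\C$ with the bilinear extension and no complex conjugation enters.

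For the second assertion I would specialize to $v=u$, $\mu=\lambda$. The identity with $x=y=u$ reads $\lambda\langle u,u\rangle = -\langle u,u\rangle$, so $(\lambda+1)\langle u,u\rangle = 0$ and therefore $\langle u,u\rangle = 0$ as soon as $\lambda\neq -1$. It remains only to pass from $\langle u,u\rangle$ to the homological form $h_A$. Since $h_A(u)=u^T(C_A^{-1}+C_A^{-T})u$ and the scalar $u^T C_A^{-T}u$ equals its own transpose $u^T C_A^{-1}u=\langle u,u\rangle$, the symmetrization gives $h_A(u)=2\langle u,u\rangle$; thus $\langle u,u\rangle=0$ forces $h_A(u)=0$.

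I expect no serious obstacle here: the argument is a two-line manipulation of the defining identity for $\phi_A$. The only points demanding care are the bookkeeping ones just flagged, namely treating the eigenvectors as genuinely complex and confirming the factor-of-two identity $h_A(u)=2\langle u,u\rangle$, rather than any conceptual difficulty.
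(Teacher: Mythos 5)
Your proof is correct and follows essentially the same route as the paper: both rest on the skew-symmetry relation $\langle x,\phi_A(y)\rangle = -\langle y,x\rangle$ (the paper verifies it at the matrix level, writing the eigenvector equations as $C_A^{-t}u=-\lambda C_A^{-1}u$, while you invoke the displayed identity directly), and your elimination yielding $(\lambda\mu-1)\langle u,v\rangle=0$ is the same computation as the paper's chain $\langle v,u\rangle = -\lambda v^tC_A^{-1}u = \lambda\mu\, v^tC_A^{-t}u$. A minor point in your favor: taking $x=y=u$ gives $(\lambda+1)\langle u,u\rangle=0$ and settles the second claim uniformly for all $\lambda\neq -1$, whereas the paper splits into the cases $\lambda^2\neq 1$ (reduced to the first claim) and $\lambda=1$ (treated separately); your bookkeeping $h_A(u)=2\langle u,u\rangle$ is also the correct way to pass from the bilinear form to the homological form.
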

\begin{proof}
Assume $C_A^{-t}u=-\lambda C_A^{-1}u$ and $C_A^{-t}v=-\mu C_A^{-1}v$. Then
$$\langle v,u \rangle =v^t C_A^{-t}u=-\lambda v^t C_A^{-1}u=\lambda \mu v^t C_A^{-t}u$$
\noindent
and the first claim follows. Moreover if $\lambda^2 \ne 1$ then $h_A(u)=0$. The case $\lambda=1$
yields $\langle u,u \rangle + \langle u,u \rangle =0$ or equivalently, $h_A(u)=0$. 
\end{proof}

\subsection{}
For an algebra $A$ and a finite-dimensional right $A$-module $M$ we call 
$$A[M]=\left[\begin{array}{cc}
A & 0\\ M &k
\end{array}\right]$$
\noindent
the {\em one-point extension} of $A$ by $M$, see \cite{Ri}. This construction provides an order to deal inductively with {\em triangular algebras}.

For $A=B[M]$, Happel's long exact sequence \cite{Ha} relates the Hochschild cohomology groups $H^i(A)$ and $H^i(B)$ in the following way:
$$0\!\to\! H^0(A)\!\to\! H^0(B)\!\to\! \End_B(M)/k\!\to\! H^1(A)\!\to\! H^1(B)\!\to\! \Ext^1_B(M,M)\!\to\! H^2(A)\!\to\! \cdots$$

Therefore, if $M$ is exceptional we get $H^i(A)=H^i(B)$, for $i\ge 0$, and moreover, the cohomology rings $H^*(A)$ and $H^*(B)$ are isomorphic. More general, if $A$ is accessible from $B$,
that is, there exists a chain of algebras $A_1=B, A_2,\ldots, A_s=A$ such that $A_{i+1}$ is a one-point extension or coextension of $A_i$ by an exceptional $A_i$-module $M_i$, then there is a ring isomorphism $H^*(A)\cong H^*(B)$. In particular, if $A$ is accessible (from $k$) then $H^i(A)=0$ for $i>0$ and $H^0(A) =k$.

An important expression for the linear term of $\chi_A(T)$ was shown in \cite{Ha} as:
$a_1= \sum\limits_{i=0}^n (-1)^i \dim_k H^i(A).$ 

\subsection{Accessible algebras} 
Following \cite{LePe5}, for an algebra $B$ we say that $A$ is {\em accessible from\/} $B$ if there is a sequence $B=B_1,B_2,\ldots,B_s=A$ of algebras such that each $B_{i+1}$ is a one-point extension (resp.\ coextension) of $B_i$ for some exceptional $B_i$-module $M_i$.

As a special case, a $k$-algebra $A$ is called {\em accessible} if $A$ is accessible from $k$. By construction, each accessible algebra $A$ is connected, moreover the indecomposable projective  $A$-modules can be arranged to form  an exceptional sequence $(P_1,\ldots,P_n)$, that is, $\Ext_A^i(P_j,P_s)=0$ for every $j > s$ and $i \ge 0$. In particular, the quiver of an accessible algebra $A$ has no loops or oriented cycles (we say that $A$ is {\em triangular}) and therefore $A$ has finite global dimension.

If not stated otherwise we shall assume that an algebra $A$ is defined over $k$ and {\em connected}, that is, its quiver is connected. Many well-known examples of algebras are accessible: hereditary algebras of tree type, more generally tree algebras, canonical algebras with three weights, poset algebras without crowns, representation-finite algebras with vanishing first Hochschild cohomology of global dimension $\leq2$. Moreover, extended canonical algebras with three weights and certain supercanonical algebras also are accessible. The construction of towers of algebras $A=A_n,A_{n-1},\ldots,A_1=k$ where each algebra $A_{i+1}$ is a one-point extension or coextension of $A_i$ by an exceptional module $M_i$, for $i=1, \ldots,n-1$ allows to perform informative inductive procedures. For instance, every accessible algebra $A$ has vanishing Hochschild cohomology $H^i(A)=0$, for $i>0$ and $H^0(A)=k$. Also, for any tree algebra $A$ such an inductive procedure yields that the Coxeter polynomial takes values $0$ or $1$ when evaluated at $-1$.

\subsection{The Calabi-Yau property}  
Let $\T$ be a triangulated category with Serre functor $S$, that is, we have functorial isomorphisms 
$$D\Hom(X,Y) \cong \Hom(Y,SX)$$
\noindent
for $X,Y \in \T$.

Denote by $[1]$ the {\em suspension functor} on $\T$. 
If $\tau$ is the Auslander-Reiten translation in $\T$, then $S=\tau \circ[1]$.
The category $\T$ is said to be $\frac{p}{q}$-{\em Calabi-Yau} for integers $q \ge 1$ and $p \in \Z$ if $S^q=[p]$. If additionally $q$ is chosen minimal, then we say that $\T$ has {\em Calabi-Yau dimension} $\frac{p}{q}$. Notation: {\rm CY-dim \,}$\T = \frac{p}{q}$.

{\em Example:} Let $E$ be a smooth elliptic curve. Then the category $\coh E$ of coherent sheaves on $E$ has Serre duality in the form
$D\Ext^1(X,Y) = \Hom(Y,X)$, yielding $\tau= 1$ and $S=[1]$. Thus $\coh E$ is Calabi-Yau of dimension $\frac{1}{1}$. In this case we say that
$\coh E$ is CY.

The following is well-known.

\begin{lemma}
Assume that the triangulated category $\T$ is $\frac{p}{q}$-Calabi-Yau.
Let $\sigma$ be the $k$-linear automorphism of the Grothendieck group $K_0(\T)$ induced by the Serre functor $S$. 
Then the following holds:

(a) $\sigma^{2q}=1$;

(b) $\phi^{2q}=1$, where $\phi= - \sigma$ is the Coxeter transformation.
\end{lemma}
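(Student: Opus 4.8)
The plan is to translate the Calabi--Yau identity $S^q=[p]$ from the triangulated category $\T$ down to the level of the Grothendieck group $K_0(\T)$, where $S$, $[1]$, and $\tau$ all act as $k$-linear automorphisms. First I would record the basic actions on $K_0(\T)$: the suspension $[1]$ is just multiplication by $-1$, since an exact triangle $X\to Y\to Z\to X[1]$ forces $[X[1]]=-[X]$ in the Grothendieck group. Writing $\sigma$ for the automorphism induced by $S$ and $\phi$ for the Coxeter transformation induced by $\tau$, the relation $S=\tau\circ[1]$ then descends to $\sigma=-\phi$, which is exactly the asserted identity $\phi=-\sigma$. Applying the Grothendieck-group functor to $S^q=[p]$ yields $\sigma^q=(-1)^p$ as an operator on $K_0(\T)$, where $(-1)^p$ denotes the scalar $(-1)^p\,\mathrm{id}$.

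From $\sigma^q=(-1)^p\,\id$ part $(a)$ is immediate by squaring: $\sigma^{2q}=\left((-1)^p\right)^2\id=\id$, since $(-1)^{2p}=1$ regardless of the parity of $p$. For part $(b)$, I would use $\sigma=-\phi$, so that $\phi=-\sigma$ and $\phi^{2q}=(-\sigma)^{2q}=\sigma^{2q}=\id$, the sign disappearing because the exponent $2q$ is even. Thus both statements reduce to the single computation that $\sigma^q$ is a sign, together with the parity bookkeeping.

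The only genuine content is the passage from the categorical equation to the linear one, so I would make sure the descent is justified: a functorial isomorphism of endofunctors of $\T$ induces an equality of the corresponding automorphisms on $K_0(\T)$, and a triangulated equivalence that is isomorphic to a power of the suspension acts on classes by the corresponding power of $-1$. I expect the main (and only) obstacle to be conceptual rather than computational: confirming that $[1]$ acts as $-\id$ on $K_0(\T)$ and that $S^q=[p]$, being an isomorphism of functors, really does pass to the honest operator identity $\sigma^q=(-1)^p\id$ rather than merely a statement up to some scalar or up to conjugacy. Once that is in hand, the two assertions follow from elementary sign and parity arguments with no further work.
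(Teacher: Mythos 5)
Your proof is correct and follows essentially the same route as the paper, whose entire argument is the one-line hint that the suspension $[1]$ induces $-\mathrm{id}$ on $K_0(\T)$; you simply spell out the descent $S^q=[p]\Rightarrow\sigma^q=(-1)^p\,\mathrm{id}$ and the parity bookkeeping that the paper leaves implicit. No gaps.
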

\begin{proof}
Use that the suspension $[1]$ induces on $K_0(\T)$ the map $-{\rm id}$.
\end{proof}

\subsection{} 
We say that an algebra $A$ is $\frac{p}{q}$-{\em Calabi-Yau} (or $A$ satisfies the CY-property) if $\Der(\mod \, A)$ has the corresponding property. 
The remarks above show that if $A$ satisfies the CY-property then $\phi_A$ is periodic, that is, 
$\phi_A^r={\rm id}$, for some $r \ge 1$. More generally, we have the following result.

\begin{theor} 
The following conditions are equivalent for $A$:

(1) $\phi_A$ is periodic;

(2) $A$ is of cyclotomic type and $\phi_A$ is diagonalizable;

(3) assume that $\chi_A=\prod_{m \in M}\Phi_m^{e(m)}$ for some subset $M$ of $\N$ and integers $e(m) \ge 1$, 
then $\phi_A$ has period ${\rm l.c.m.}\{ \phi(m) : m \in M\}$.
\end{theor}
\begin{proof}
(1) implies (2): Consider the Jordan block decomposition $J_{n_1}(\mu_1)\oplus \ldots\oplus J_{n_s}(\mu_s)$ of $\phi_A$. The $r$-th power of a block $J_m(\mu)^r$ is the identity if and only if $m=1$ and $\mu^r=1$. Hence if $\phi_A$ is periodic then $n_1=\ldots=n_s=1$ and $A$ is of cyclotomic type. 

(2) implies (1): Conversely, assume $P$ is an invertible matrix such that $P\phi_A P^{-1}={\rm diag}(\lambda_1,\ldots,\lambda_n)$ and $\lambda_s^r=1$, for all $s=1,\ldots,n$. Then $P\phi_A^rP^{-1}=(P\phi_AP^{-1})^r={\rm id}_n$ and $\phi_A^r= P^{-1}P={\rm id}_n$.

(3) implies (1) is clear. (1) implies (3): consider $\phi_A$ as an operator of the $n$-dimensional complex vector space $V$. For $\lambda$ an eigenvalue of $\phi_A$ there is a set of eigenvectors 
$B(\lambda)$ of $\lambda$ such that $\bigcup_{\lambda} B(\lambda)$ is a basis of $V$. Moreover, for
$\lambda$ there is some $m(\lambda) \in M$ such that $\Phi_{m(\lambda)}(\lambda)=0$. In particular, $\lambda^{\phi(m(\lambda))}=1$.

Set $r={\rm l.c.m.} \{ \phi(m): m \in M \} $ then for $u \in B(\lambda)$ we have $r= \phi(m(\lambda))r'$ and $\phi_A^r(u)=(\lambda^{\phi(m(\lambda))})^{r'}u=u$, that is, $\phi_A^r={\rm id}_V$.
\end{proof}

\section{Measures for $\chi_A$} 

\subsection{The spectral radius of a Coxeter transformation} 
Let $A$ be a basic finite dimensional $k$-algebra with $k$ an algebraically
closed field. We assume $A=kQ/I$ for a quiver $Q$ without
oriented cycles and $I$ an ideal of the path algebra.
The following facts about the Coxeter transformation $\phi_{A}$ of $A$ are fundamental:

(i) Let $S_1,\ldots,S_n$ be a complete system of pairwise
non-isomorphic simple $A$-modules, $P_1,\ldots,P_n$ the corresponding
projective covers and $I_1,\ldots,I_n$ the injective envelopes. Then
$\phi_{A}$ is the automorphism of $K_0(A)$ defined by $\phi_{A}[P_i]=-[I_i]$, where $[X]$
denotes the class of a module $X$ in $K_0(A)$.

(ii) For a hereditary algebra $A=kQ$, the spectral radius
$\rho_A$ determines the representation type of $A$ in the following manner:

$\bullet$ $A$ is representation-finite if $1=\rho_A$
is not a root of the Coxeter polynomial.

$\bullet$ $A$ is tame if $1=\rho_A \in {\rm Spec}(\phi_{A})$.

$\bullet$ $A$ is wild if $1< \rho_{A}$. Moreover, if $A$ is connected,
Ringel \cite{Ri2} shows that the spectral radius is a simple root of
$\chi_{A}$. Then Perron-Frobenius theory yields a vector $y^+\in
K_0(A)\otimes_{\Z}\R$ with positive coordinates such
that $\phi_{A}y^+ =\rho_{A}y^+$.  

\subsection{The energy of a matrix}
Let $A$ be an algebra and $\lambda_1, \ldots, \lambda_n$ be the eigenvalues  of the Coxeter transformation $\phi_A =(f_{st})_{1 \le s,t \le n}$.
We may assume that $|\lambda_1| \le |\lambda_2| \le \ldots \le |\lambda_n|=\rho_A$.

We recall that the {\em singular eigenvalues} of $\phi_A$ are the non-negative square roots of the eigenvalues of the symmetric matrix $\phi_A^T \phi_A$, 
which is positive semidefinite. We may assume that the singular eigenvalues are 
$$0 \le \sigma_1 \le \sigma_2 \le \ldots \le \sigma_n$$  
\noindent
The following holds,

(a) $\sigma_i \ge |\lambda_i|$, for all $i=1,\ldots,n$

(b) $\sum\limits_{i=1}^n |\lambda_i|^2 \le \tr(\phi_A^t \phi_A)= \sum\limits_{i=1}^n \sigma_i^2 =\sum\limits_{s,t=1}^n |f_{st}|^2$, 
equality holds if and only if $\phi_A$ is normal. We write 
$$||\phi_A||^2=\sum\limits_{i=1}^n \sigma_i^2 =\sum\limits_{s,t=1}^n |f_{st}|^2$$

\begin{theor}
The following inequatilities hold:
$$n \le e(\phi_A) \le {\sqrt n}\, ||\phi_A|| \le n \rho_A$$
\noindent
Moreover, $A$ is of cyclotomic type if and only if any of the equalities hold.
\end{theor}
\begin{proof}
We start considering the first lower bound.
Let $\lambda$ be an eigenvalue of $\phi_A$. Since $\phi_A$ is invertible, $\lambda \ne 0$. Since the characteristic polynomial $\chi_A$ is
real (resp. self-reciprocal) then ${\overline \lambda}$ (resp. $\lambda^{-1}$) is another root of $\chi_A$. Consider the set $R(s)= \{ \lambda_s, {\overline \lambda_s}, \lambda_s^{-1}, {\overline \lambda_s^{-1}}\}$ the roots associated with $\lambda_s$. For $\lambda_s=a_s +i b_s \ne 0$, for real numbers $a_s,b_s$, 
the cardinality $r(s)$ of $R(s)$ may be $4$ (in case $b_s \ne 0$), or $2$ (in case $b_s=0$ and $a_s \ne 0,1$) or $1$ (in case $\lambda_s=1$). 

We shall prove that 
$\mu_s:= \sum\limits_{\mu \in R(s)} |\mu| \ge r(s)$ with equality if and only if $\lambda_s$ has modulus one. The wanted bound follows.

Indeed, write $\lambda_s=c(a+ib) \ne 0$ in {\em polar form}, that is $a,b,c$ are real numbers with $c>0$ and $a^2+b^2=1$. In case $r(s)=2$ then
$\mu_s=c + c^{-1}$, which is an increasing function at the interval $[1,\infty)$. Therefore with a minimal value $2$ at $1$. In case $r(s)=4$ then
$$\mu_s = (|c(a+ib)|+|c^{-1}(a+ib)|)+(|c(a-ib)|+|c^{-1}(a-ib)|)\ge 4$$
\noindent
with equality if and only if $c=1$.

Next, an application of Schwartz inequality yields,
$$e(A)^2 =  (\sum\limits_{i=1}^n |\lambda_i|)^2 \le  n \sum\limits_{i=1}^n |\lambda_i|^2  = n ||\phi_A||^2$$
\noindent
Moreover,
$$||\phi_A||^2=\sum\limits_{i=1}^n |\lambda_i|^2 \le n \rho_A^2$$
\noindent
Finally, if $A$ is of cyclotomic type then all $\lambda_i$ have modulus one. Conversely, if
$e(A) = n$, then $\mu_s= \sum\limits_{\mu \in R(s)} |\mu| = r(s)$ and 
$|\lambda_s|=1$, for all $s=1,\ldots, n$.  If it is the case that $e(A)={\sqrt n} ||\phi_A||$ then
$$(\sum\limits_{i=1}^n |\lambda_i|)^2 =  n \sum\limits_{i=1}^n |\lambda_i|^2$$
\noindent
which implies that all $|\lambda_i|$ have a common value. Since the set of eigenvalues of $\phi_A$ is closed under multiplicative inverses,
then this common value may only be $1$. In any case, $A$ is of cyclotomic type.
\end{proof}

{\bf Remark:} Alternatively, the first inequality in the Theorem above may be seen as a consequence of the arithmetic mean-geometric mean inequality. Indeed, for the set of non-negative numbers 
$\{ |\lambda_1|,\ldots,|\lambda_n|\}$ we get
$$e(A)=n \frac{\sum\limits_{i=1}^n |\lambda_i|}{n}  \ge n \left( \prod_{i=1}^n |\lambda_i| \right)^{\frac{1}{n}}=
n (|{\rm det \,}(\phi_A)|)^{\frac{1}{n}}=n.$$

\subsection{Mahler measure}
Given a Laurent polynomial $P(z)$ with integer coefficients, its Mahler measure $M(P)$ is defined as the
geometric mean of the function $|P|$ over the real circle, that is,
$$M(P) = exp(\int_0^1 log (|P(e^{2 \pi i t})|) dt$$
\noindent
For a polynomial in one variable, Mahler obtained the more  elementary expression for the measure given in the Introduction, see \cite{Ma}.

Kronecker's result implies that $M(P(T)) = 1$ precisely when $P(T)$ is a product of cyclotomic polynomials and a monomial of the form
$T^m$. Rather little is known, however, about values of the Mahler measure near 1. In 1933, D. H. Lehmer  found that
the polynomial
$$T^{10} + T^9 - T^7 - T^6 - T^5 - T^4 - T^3 + T + 1$$
\noindent
has Mahler measure $\mu_0 = 1.176280 . . .$, and he asked if there exist any smaller values  exceeding 1. This
question of determining whether 1 is a limit point for the Mahler measure is known as {\em Lehmer's problem}.

In a forthcoming publication we show that for an accessible algebra $A$ not of cyclotomic type there is a convex subcategory 
$B$ of $A$ satisfying the following properties:

(a) $B$ is minimal not of cyclotomic type, that is, if $C$ is any proper convex subcategory of $B$,  then $C$ is of cyclotomic type;

(b) the Mahler measure of $B$ is $M(\chi_B) \ge \mu_0$.

\subsection{Proof of Theorem 1} The equivalence of (1), (2), (4) and (6) follows from Kronecker's theorem. The remaining equivalences follow from
(3.2). \cuadro

\section{On the decomposition of the Coxeter polynomial of an algebra of cyclotomic type}

\subsection{}
Let $A$ be an algebra with $n$ vertices whose Coxeter polynomial decomposes as $\chi_A=\prod_{m \in M}\Phi_m(T)^{e(m)}$ for some subset $M$ of $\N$ and integers $e(m) \ge 1$. Several conditions on the coefficients of $\chi_A= \sum\limits_{i=0}^n a_i T^i$ and of the polynomials $\Phi_m$ follow, for instance:

(1) $\sum\limits_{m \in M}e(m) \phi(m)=n$,  which follows from the degree calculation, where $\phi(-)$ is {\em Euler's totient function};
 
(2) if $1 \in M$ then $e(1)$ is an even number, which follows from the fact that $\Phi_m$ for $m \ne 1$, as well as $\chi_A(T)$,
are self-reciprocal polynomials, but $\Phi_1(T)=T-1$ is not so, while $\Phi_1(T)^2=T^2-2t+1$ is self-reciprocal;

(3) $\sum\limits_{m \in M}e(m) \mu(m) = - a_1$, which follows from the fact that the linear coefficient of $\Phi_m(T)$ is 
$a_1(\Phi_m(T))=-\mu(m)$. Use that $a_0(\Phi_m(T))=1$ for $m \ne 1$ and $a_0(\Phi_1(T)^{e(1)})=1$.

\begin{theor}
Let $A$ be an algebra whose Coxeter polynomial factorizes as $\chi_A(T)=\prod_{m \in M}\Phi_m(T)^{e(m)}$ for some subset $M$ of $\N$ and integers $e(m) \ge 1$. Set $\epsilon(m)=0$ if $m \in M$ and $\epsilon(m)=1$ if $m \notin M$. For any prime number $p$, let $M(p)$ (resp. $M^{\prime}(p)$) be the set of those $m \in M$ of the form $m=p^s$ (resp. $m=2 p^s$) for some $1 \le s$.  
Let $f(p)= \sum\limits_{p^s=m \in M(p)} e(m)$ and $f'(p)= \sum\limits_{2 p^s=m \in M'(p)} e(m)$.
Then

{\rm (a)} $\chi_A(1)=\epsilon(1)\, \prod_{m \in M(p)} p^{e(m)}= \epsilon(1) \, \prod_{p \,\,{\rm prime}} p^{f(p)}$

{\rm (b)} $\chi_A(-1)=\epsilon(2)\, 2^{e(1)} \prod_{m \in M^\prime(p)} p^{e(m)} =
\epsilon(2)\, 2^{e(1)} \prod_{p \,\, {\rm prime}} p^{f'(p)}$. 
Moreover, if this number is not zero then, for each prime $2 \le p$, the number
$f'(p)$ is even.
\end{theor}
\begin{proof}
(a) According to \cite{Pra}, the evaluation $\Phi_m(1)=1$ if $m \ne 1$ and $m \ne p^s$, for all prime $p$. Otherwise, $\Phi_1(1)=0$
or $\Phi_{p^s}(1)=p$. Then the evaluation 
$$\chi_A(1)= \prod_{m \in M}\Phi_m(1)^{e(m)}= \epsilon(1) \, \prod_{m \in M(p)} p^{e(m)}$$

(b) According to \cite{Pra}, the evaluation $\Phi_m(-1)$ yields 
$\Phi_m(-1)=-2$ if $m=1$, $\Phi_m(-1)= 0$ if $m=2$, $\Phi_m(-1)=p$ if $m=2 p^s$ with $p$ a prime number and $s \ge 1$ and $\Phi_m(-1)=1$ otherwise. Therefore
$$\chi_A(-1)= \prod_{m \in M}\Phi_m(-1)^{e(m)}= \epsilon(2) \, 2^{e(1)} \prod_{m \in M^{\prime}(p)} p^{e(m)}$$
\noindent
Assume $\chi_A(-1)=r^2 >0$ for some integer $r$, then for any prime $2 <p$
$$\prod_{m \in M^{\prime}(p)} p^{e(m)}= p^{\sum\limits_{m \in M^\prime(p)} e(m)}$$ 
\noindent
is an even power of $p$. For $p=2$, use additionally that $e(1)$ is an even number. The claim follows.
\end{proof}

\section{More examples}

\subsection{Tensor products of polynomials and algebras}

Given two monic integral polynomials $f(T)$ with roots $a_1,\ldots,a_n$ and $g(T)$ with roots $b_1,\ldots,b_m$, 
the {\em tensor product} of $f(T)$ and $g(T)$ is
$$f(T) \otimes g(T)= \sum\limits_{i=1}^n \sum\limits_{j=1}^m (T- a_i b_j)$$
\noindent
By \cite{Glasby}, the polynomial $f(T) \otimes g(T)$ is integral of degree ${\rm degree}(f(T)){\rm degree}(g(T))$.
An important problem is to find the irreducible decomposition of $f(T) \otimes g(T)$.

We are interested in calculating $\Phi_6(T) \otimes \Phi_m (T)$ for $m \in \Z$. It is easy to show that

$\Phi_m(T) \otimes \Phi_r(T)=\Phi_{mr}(T)$, whenever $m,r$ are relative prime.

We observe the following facts:

$\bullet$ $\Phi_6(T) \otimes \Phi_2(T)=\Phi_3(T)$

$\bullet$ $\Phi_6(T) \otimes \Phi_3(T)=\Phi_1(T)^2 \Phi_2(T)^2$ 

$\bullet$ $\Phi_6(T) \otimes \Phi_4(T)=\Phi_{12}(T)$

$\bullet$ $\Phi_6(T) \otimes \Phi_6(T)=\Phi_2(T)^2 \Phi_3(T)$

$\bullet$ $\Phi_6(T) \otimes \Phi_{12}(T)=\Phi_4(T)^2 \Phi_{12}(T)$

As it is easy to verify. For instance, we check the two last claims. Let $a$ and $b$ be the roots of $\Phi_6(T)=T^2-T+1$. The roots of $\Phi_6(T) \otimes \Phi_6(T)$ are 
$a^2=a-1=-b$, $b^2=-a$ and $ab=1=ba$.  
Let $c_i={\rm exp}({2 \pi i \over 12})$, for $i=1,5,7,11$, be the roots of $\Phi_{12}(T)=T^4-T^2+1$.
The roots of $\Phi_6(T) \otimes \Phi_{12}(T)$ are $ac_1, ac_7$ and $bc_5,bc_{11}$ which are roots of
$\Phi_4(T)$ and $ac_5=c_7,ac_{11}=c_1,bc_1=c_{11},bc_7=c_5$.

According to \cite{Glasby}, for $m=2^r 3^s q$, where $2$ and $3$ do not divide $q$, we have:

$\bullet$ $\Phi_6(T) \otimes \Phi_m(T)=\Phi_m(T)^2$, in case $r >1$ and $s>1$, 
\noindent
other products have more complex expressions.

\subsection{} As observed by Ladkani, see \cite{La} and also \cite{Ha3}, at the level of algebras we have:

\begin{propo}
Let $A$ and $B$ be algebras of cyclotomic type. Then $A \otimes B$ is an algebra of cyclotomic type with
$$\chi_{A\otimes B} =\chi_A \otimes \chi_B$$
\end{propo}

As a particular example, the  algebra $R_{2n}$  with $2n$ vertices and whose quiver is given as
$$\xymatrix{1\ar[r]\ar[d]&2\ar[r]\ar[d]&3\ar[r]\ar[d]&\cdots \ar[r] &n \ar[d] \\ 1'\ar[r]&2'\ar[r]&3'\ar[r]&\cdots \ar[r]&n'}
$$
with all commutative relations has Coxeter polynomial 
$$\chi_{2n}= \chi_{\A_n}\otimes \chi_{\A_2} = v_{n+1} \otimes v_3$$ 
\noindent
which is a product of cyclotomic polynomials, see \cite{KuLeMe}.

\subsection{Repetitive algebras}
Let $A$ be a triangular algebra with $n$ vertices. Consider the {\em double repetitive} algebra 
$$A^{(2)}=\begin{pmatrix}A&D(A)\\ 0 & A \end{pmatrix}$$ 
\noindent
making use of the $A-A$-bimodule structure of $D(A)$.

The Cartan matrix $C_A^{(2)}$, its inverse and the Coxeter matrix $\phi_A^{(2)}$ of $A^{(2)}$ are
{\footnotesize$$  C_A^{(2)}=\begin{pmatrix}C_A & C_A^t\\0&C_A \end{pmatrix}, \,\, 
(C_A^{(2)})^{-1}=\begin{pmatrix}C_A^{-1} & -C_A^{-1} C_A^t C_A^{-1} \\0&C_A^{-1}\end{pmatrix}{\rm \,\, and \,\,} 
\phi_A^{(2)}=\begin{pmatrix}0 & -{\rm id_n} \\ \phi_A^2 & \phi_A \end{pmatrix}$$}
\noindent
Consider an eigenvector $u$ of $\phi_A$ with eigenvalue $\lambda$. 
Let $a_1$ and $a_2$ be the roots of the equation $x^2-\lambda x +\lambda^2=0$. Therefore 
$a_m={1+(-1)^m \sqrt{-3} \over 2}\, \lambda$, for $m=1,2$. Then
{\footnotesize $$\phi_A^{(2)}\begin{pmatrix}u \\ - a_m u \end{pmatrix}= \begin{pmatrix} a_m u \\ \phi_A^2(u)- a_m \phi_A(u)\end{pmatrix}
= a_m \begin{pmatrix}u \\- a_m u \end{pmatrix}$$}
\noindent
Therefore the eigenvalues of $\phi_A^{(2)}$ are of the form ${1 \pm \sqrt{-3} \over 2}\, \lambda$ for eigenvalues
$\lambda$ of $\phi_A$.

\begin{propo}
If $A$ is an algebra of cyclotomic type then the double repetitive algebra is also of cyclotomic type with $\chi_{A^{(2)}}=\Phi_6 \otimes \chi_A$.
\end{propo}
\begin{proof}
Just observe that $|{1\pm \sqrt{-3} \over 2}|=1$.
\end{proof}

\subsection{Groups of automorphisms}
Let $A=kQ/I$ be a triangular algebra given as a quiver algebra $kQ$ with an ideal $I$, equipped with a group $G$ of automorphisms induced by automorphisms of $Q$. Each $g \in G$ induces a permutation matrix
$\gamma(g) \in {\rm GL}_n(\Z)$ which is called a {\em symmetry}. The representation $\gamma: G \to {\rm GL}_n(\Z)$ is called the {\em canonical representation}.
Clearly, $\phi_A$ is an automorphism of $\gamma$. We shall recall well-known results concerning the action of
$G$ on $A$.

Let $R_1,\ldots,R_q$ be a set of representatives of the {\it irreducible}
$\Q$-representations of $G$, where $R_1$ is the trivial representation.
By Maschk\'e's theorem, there exists an invertible rational matrix $L$ such that
the conjugate representation $\gamma^L$ has a decomposition
$$
\gamma^L = \bigoplus\limits_{\alpha =1}^q R_\alpha^{r(\alpha)}.
$$

The dimension of $R_{\alpha}$ is denoted by $\dim R_\alpha$, that is,
$R_\alpha : G\to \ {\rm GL}_{\dim R_\alpha}(\Z)$. Then we have $n =
\sum\limits_{\alpha =1}^q r(\alpha) \dim R_\alpha$.
Since $\phi_A$ is an automorphism of $\gamma$, by Schur's lemma, the
conjugate $\phi_A^L (:= L\phi_A L^{-1})$ takes the block diagonal
form
$$
\phi_A^L = \begin{pmatrix}\phi_1& & 0\cr & & \cr &\ddots&\cr & & \cr
0& &\phi_q\end{pmatrix}
$$
where $\phi_\alpha : R_\alpha^{r(\alpha)}\to R_\alpha^{r(\alpha)}$ is an
automorphism, that is, $\phi_\alpha \in {\rm GL}_{r(\alpha) \dim R_\alpha}(\Z)$.

Therefore we get a factorization
$$
\chi_A(T)=\chi_{\phi_1} (T) \ldots \chi_{\phi_q}(T)
$$
where $\chi_{\phi_\alpha} (T) \in \Z[T]$ is the characteristic polynomial of
$\phi_\alpha$. In particular, $\chi_A(T)$ has at least $\vert \{\alpha : r(\alpha) >
0 \}\vert$ factors.

Let $S_1,\ldots,S_m$ be a set of representatives of the
{\it irreducible} $\C$-representations of $G$. Let $S_1$ be the trivial
representation. Then $m$ is the number of conjugacy classes of $G$.

There is a conjugate $\gamma^M$ of $\gamma$ with a decomposition
$$
\gamma^{M} = \bigoplus\limits_{\beta =1}^m S_{\beta}^{n(\beta)}.
$$

Let $\chi_\beta$ be the {\it character} corresponding to $S_\beta$, that is
$\chi_\beta : G\to \C^*, g\mapsto tr S_\beta (g)$. The characters
$1=\chi_1,\ldots,\chi_m$ form an orthonormal basis of the class group
$X(G)$, with the scalar product
$$
\langle \chi,\chi' \rangle={1\over \vert G\vert} \sum\limits_{g\in G} \chi(g)\overline{\chi'(g)}.
$$
\noindent  
The following holds:

$\bullet$ For any $g\in G$, $\chi_\gamma (g)$ is the number of
fixed points of $g$ on the vertices of $A$.

$\bullet$\quad {\it Burnside's lemma:} the number of orbits of vertices of $A$ is
$$
t_0 (G)= {1\over \vert G\vert} \sum\limits_{g\in G} \chi_\gamma (g).
$$
\noindent
Moreover, $n(1)=t_0(G) = r(1)$.

\begin{propo}
If $A$ has non-trivial symmetries, then $\chi_A(T)$ is not irreducible over $\Z[T]$. Moreover the following holds:

{\rm (a)} $\chi_A(T)$ accepts a factor of degree $ \le t_0(G)$;

{\rm (b)} in case $A$ is of cyclotomic type, there is a factor $\Phi_m(T)$ of $\chi_A(T)$ with $m \le t_0(G)^2$. 
\end{propo}
\begin{proof}
Consider $e_1,\ldots,e_n$ the canonical basis of $K_0(A)=\Z^n$. Consider the subspace $V$ of $G$-invariant vectors 
in $K_0(A) \otimes \Q$. Then $\dim_\Q V =t_0(G)$.

{\rm (a):} For a decomposition $K_0(A) \otimes \Q= V \oplus V'$ the transformation $\phi_A$ takes the form
$$\begin{pmatrix}\phi_1 & * \\ 0 & \phi_2 \end{pmatrix}$$
\noindent
which yields $\chi_A(T)=\chi_{\phi_1}(T) \chi_{\phi_2}(T)$ with ${\rm degree}(\chi_{\phi_1}(T)) = t_0(G)$.

{\rm (b):} Assume that $\chi_A(T)= \prod_{m \in M} \Phi_m(T)^{e(m)}$ for some set $M \subset \N$ and numbers $e(m) \ge 1$. 
Then there is some $m_0 \in M$ satisfying
$$\phi(m_0) ={\rm degree}(\Phi_{m_0}(T)) \le t_0(G).$$
\noindent
By \cite{KeOs}, the totient function satisfies $\phi(m) \ge \sqrt{m}$, except 
for $m=2$ and $m=6$. We distinguish two situations:

{\rm (i):} Assume $t_0(G) \ge 3$. If $m_0 =2$ or $6$ then $m_0 \le t_0(G)^2$. Otherwise $m_0 \le \phi(m_0)^2 \le t_0(G)^2$.

{\rm (ii):} Assume $t_0(G) \le 2$. We observe that $t_0(G)$ is the number of orbits of vertices of $Q$ under the action of $G$.
Since $A$ is triangular, there are at least two orbits, one corresponding to sources and another corresponding to sinks in $Q$.
Therefore $t_0(G)=2$ and all sources belong to an orbit $Gi_0$ and all sinks to an orbit $Gj_0$.

In case $G$ fixes $i_0$ then $Q$ has the following shape
{\footnotesize$$\xymatrix{&& i_0 \ar[dll] \ar[dl] \ar[dr] \ar[drr]&&\\ j_1 & j_2& \cdots&j_{m-1}& j_m}$$}
\noindent
with $G$ the group of permutations $S_m$ for $m \ge 2$. We observe that 
$$ \chi_A=(T^2 -(m-2)T+1)(T+1)^{m-1}$$
\noindent
which is a product of cyclotomic polynomials only if $m \le 4$. In this situation $\Phi_1$ is a factor of $\chi_A$.

In case $G$ acts freely on vertices of $Q$ then the quotient $A \to B=A/G$ under the action of $G$ is a Galois quotient in
the sense recalled in the next section. Then $B$ is the algebra of the quiver with two vertices $a$ and $b$ and $m \ge 2$ arrows
from $a$ to $b$. Its characteristic polynomial is 
$$\chi_B=T^2-(m^2-2)T+1$$
\noindent
We shall prove in  that, along with $\chi_A$, the polynomial $\chi_B$ is cyclotomic. This is possible only if $m=2$.
Therefore $G$ is cyclic and $Q$ has the shape
{\footnotesize$$\xymatrix{&&i_0 \ar[ld]\ar[rd]&&\\&j_s&&j_0&\\&i_s \ar[u]\ar[r]&\cdots&i_1 \ar[u]\ar[l]&}$$}
\noindent
which has Coxeter polynomial 
$$\chi_A=(T-1)^2(T^s+T^{s-1}+\ldots+T+1)^2$$
\noindent
and accepts $\Phi_1$ as a factor. 
\end{proof}

\subsection{Galois quotients}
Let $A=kQ/I$ be an algebra given as a quiver algebra $kQ$ with an ideal $I$, equiped with a group $G$ of automorphisms induced by automorphisms of $Q$ acting without fixed vertices. Then the quotient $B=A/G$
is an algebra with a presentation $B=k{\overline Q}/{\overline I}$ where ${\overline Q}=Q/G$ is the quotient defined by the action of $G$ on the quiver $Q$. We say that $B$ is a {\em Galois quotient} of $A$ and we say that $A$ is a {\em Galois cover} of $B$ defined by the group $G$, we write $B=A/G$. We say that $B$ is simply connected if whenever $B=A/G$ is a Galois quotient then $G$ is trivial.

\begin{propo}
Let $B=A/G$ be a Galois quotient algebra of $A$. If $A$ is an algebra of cyclotomic type then $B$ is of cyclotomic type.
\end{propo}
\begin{proof}
Consider the Cartan matrix $C_A$ of $A$. The columns $p_i=e_iC_A$ of $C_A$, for $i=1,\ldots,n$, are the
dimension vectors of the indecomposable projective $A$-modules $P_i=A e_i$. Correspondingly, the indecomposable projective $B$-module ${\overline P}_s=B {\overline e}_s$ has dimension vector 
${\overline p}_s={\overline e}_s C_B$, where $s=1,\ldots,m$ runs among the classes of the action of $G$ on the vertices of $Q$.

Assume that $A$ is of cyclotomic type. Let $\lambda$ be an eigenvalue of $\phi_B=-C_B C_B^{-t}$ with associated eigenvector $u \in K_0(B)$. We shall prove that $\lambda$ is an eigenvalue of $\phi_A$ which implies that $\lambda$ is a root of unity.

Indeed, define $v \in K_0(A)$ a vector $v(i)=u(s)$, where $s=Gi$. Then
$$\lambda (v C_A^t)(i)=\lambda \sum\limits_{j=1}^n v(j) p_j(i)= 
\lambda \sum\limits_{t=1}^m u(t) \left( \sum\limits_{g \in G} p_{g(j)}(i) \right)=$$  
$$=\lambda \sum\limits_{t=1}^m u(t){\overline p}_t(s)= \lambda (u C_B^t)(s)= -(uC_B)(s)$$
\noindent
and $(uC_B)(s)=(vC_A)(i)$ by the same argument. Thus $\lambda vC_A^t= -v C_A$ as we wanted to prove. 
\end{proof}

\subsection{Canonical and supercanonical algebras}

A canonical algebra $\Lambda$ is determined by a weight sequence ${\bf p}=(p_1,\ldots,p_t)$ of $t$ integers $p_i\geq2$ and a parameter sequence ${\bf \lambda}=(\lambda_3,\ldots,\lambda_t)$ consisting of $t-2$ pairwise distinct non-zero scalars  from the base field $k$. (We may assume $\lambda=1$ such that for $t\le 3 $ no parameters occur).  Then the algebra $\Lambda$ is defined by the quiver
$$
\def\c{\circ}
\xymatrix@C18pt@R10pt{
                                        &\c\ar[r]^{x_1}          & \c\ar[r]^{x_1}         &\cdots\c\ar[r]^{x_1}&\c\ar[dr]^{x_1}&\\
\c\ar[r]^{x_2}\ar[ru]^{x_1}\ar[rd]_{x_t}&\c\ar[r]^{x_2}\ar@{.}[d]&\c\ar[r]^{x_2}          &\cdots\c\ar[r]^{x_2}&\c\ar[r]^{x_2}\ar@{.}[d] &\c    \\
                                        &\c\ar[r]_{x_t}          &\c\ar[r]_{x_t}          &\cdots\c\ar[r]_{x_t}&\c\ar[ru]_{x_t}&    \\
}
$$
satisfying the $t-2$ equations:
$$x_i^{p_i}=x_1^{p_1}-\lambda_i x_2^{p_2},\qquad i=3,\ldots,t.$$
For more than two weights, canonical algebras are not hereditary. Instead, their representation theory is determined by a hereditary category, the category $\coh\X$ of coherent sheaves on a {\em weighted projective line} $\X$, naturally attached to $\Lambda$, see~\cite{GeLe}.
\begin{propo}
 Let $\Lambda$ be a canonical algebra. Then $\Lambda$ is the endomorphism ring of a tilting object in the category $\coh\X$ of coherent sheaves on the weighted projective line $\X$. The category $\coh\X$ is hereditary and satisfies Serre duality in the form $D \Ext^1(X,Y)=\Hom(Y,{\tau X})$ for a self-equivalence $\tau$ which serves as the Auslander-Reiten translation.~\hfill\qed
\end{propo}
Canonical algebras were introduced by Ringel~\cite{Ri}. They form a key class to study important features of representation theory. In the form of tubular canonical algebras they provide the standard examples of tame algebras of linear growth. Up to tilting canonical algebras are characterized as the connected $k$-algebras with a separating exact subcategory or a separating tubular one-parameter family (see \cite{LePe1} and \cite{Sk2}).
That is, the module category $\mod\Lambda$ accepts a {\em separating tubular family}
$\mathcal{T}=(T_\lambda)_{\lambda \in
P_1k}$, where $T_\lambda$ is a homogeneous tube for all $\lambda$
with the exception of $t$ tubes $T_{\lambda_1},\ldots,T_{\lambda_t}$
with $T_{\lambda_i}$ of rank $p_i$ ($1\le i\le t$). The Coxeter polynomial of $\Lambda$ is given by
$$
\chi_{\Lambda}=(x-1)^2\prod_{i=1}^t v_{p_i}.
$$
\noindent
In particular, canonical algebras are of cyclotomic type.

Following \cite{LePe2},
a supercanonical algebra is defined as follows:
The \emph{double cone} $\hat{S}$ of a finite poset $S$ is the poset obtained from $S$ by adjoining a unique source $\alpha$ and a unique sink $\omega$, like in the pictured example
$$
\def\c{\circ}
\def\b{\bullet}
\xymatrix@C10pt@R4pt{
   &         &                &\c      &  & &        &        &        &                &\c\ar[rrd]&         &  \\
S: & \c\ar[r]&\c\ar[ru]\ar[r] &\c\ar[r]&\c& &\hat{S}: &\alpha\ar[r]&\c\ar[r]&\c\ar[ur]\ar[dr]\ar@{.}[rrr]&          &         &\omega\\
   &         &                &        &  & &        &        &        &                &\c\ar[r]  &\c\ar[ur]&  \\
}
$$
Due to commutativity there is a unique path $\kappa_S$ from $\alpha$ to $\omega$ in $\hat{S}$.
Let now $S_1,\ldots,S_t$ be  finite posets, $t\ge 2$ and $\lambda_3,\ldots,\lambda_t$
pairwise different elements from  $ k\setminus \{0\}$. The supercanonical algebra
$A=A(S_1,\ldots,S_t;\lambda_3,\ldots,\lambda_t)$ is obtained from the fully commutative quivers $\hat{S}_1,\ldots,\hat{S}_t$ by identification of the sources and sinks, respectively, and requesting additionally the
$t-2$ relations $\kappa_i=\kappa_2-\lambda_i\kappa_1$, $i=3,\ldots,t$, where $\kappa_i=\kappa_{S_i}$. The next figure yields an example of a supercanonical algebra with three arms
$$
\def\c{\circ}
\xymatrix@R4pt{
                           &\c\ar[r]          &\c\ar[r]          &\c\ar[rd]        &  \\
\c\ar[r]\ar[ur]\ar[ddr]    &\c\ar[rr]          &                  &\c\ar[r]         &\c\\
                           &                  &\c\ar[dr]         &                 &  \\
                           &\c\ar[ur]\ar[dr]\ar@{.}[rr]&                  &\c\ar[uur]       &  \\
                           &                  &\c\ar[ur]         &                 &  \\
}
$$
where we assume that $\kappa_3=\kappa_2-\kappa_1$.
In case, $S_1,\ldots,S_t$ are linear quivers
$S_i=[p_i]\colon 1\to 2\to \cdots \to p_{i-1}$,
the algebra $A(S_1,\ldots,S_t;\lambda_1,\ldots,\lambda_t)$
 is just the canonical algebra $\Lambda(p_1,\ldots,p_t;\lambda_3,\ldots,\lambda_t)$.

Returning to the general case, a simple calculation shows that
$$\chi_{A}=(x-1)^2\prod^t_{i=1}\chi_{S_i}$$
where $\chi_{S_i}$ is the Coxeter polynomial of the poset algebra
$S_i$, $i=1,\ldots,t$. In particular, in case every poset algebra $k[S_i]$ is of cyclotomic type, for $i=1,\ldots,t$
then the supercanonical algebra $A$ is of cyclotomic type.

\subsection{One-point extensions of canonical algebras} \label{ssect:onepoint_canonical}
For detailed results related to one-point extensions by exceptional modules over canonical algebras we refer to \cite{LePe3, LePe4}.

Let $\Lambda$ be a canonical algebra of weight type $(p_1,\ldots,p_t)$.

$(a)$ If $M$ is regular simple in the $i$-th exceptional tube $\T_i$ (of $\tau$-period $p_i$), then the one-point extension $\Lambda[M]$ is tilting-equivalent to the canonical algebra of weight type $(p_1,\ldots,p_i+1,\ldots,p_t)$ having the same parameter sequence as $\Lambda$.

$(b)$ If $M$ has quasi-length $s$ in $\T_i$ (recall this means that $s<p_i$), then $\Lambda[M]$ is derived equivalent to a supercanonical algebra in the sense of \cite{LePe2}, where the linear arms of the canonical algebra with index different from $i$ are kept, and the $i$-th linear arm is changed to the poset $K(p_i,s)$
\footnotesize$$
\xymatrix@R8pt@C14pt{
       &       &             &                   & \star         &         &    \\
1\ar[r]&2\ar[r]&\cdots\ar[r] &p_i-s\ar[r]\ar[ru] & p_i-s+1\ar[r] &\cdots\ar[r]&p_i-1
}
$$\normalsize
We write $\Lambda(i,s)$ for $\Lambda[M]$ and call it a {\em supercanonical algebra of restricted type}.

The structure of the bounded derived category of an extended canonical algebra
$\langle{p_1,\dots,p_t}\rangle$ sensibly depends on the sign of the Euler
characteristic $\chi_{\x}=2-\sum_{i=1}^t(1-1/p_i)$ of the weighted projective line $\x$
associated to $\Lambda$. According to \cite{LePe4}, the description of the derived category of an extended canonical algebra yields an interesting trichotomy.

$(i)$ Positive Euler characteristic: Let $\Lambda$ be a canonical algebra of domestic representation type $(p_1,p_2,p_3)$, and $\Delta$ be the Dynkin diagram $[p_1,p_2,p_3]$. Then the extended canonical algebra $\langle{p_1,p_2,p_3} \rangle$ is derived equivalent to the (wild) path algebra  of a quiver $Q$ having extended Dynkin type $\tilde{\tilde{\Delta}}$.

$(ii)$ Euler characteristic zero: Consider a canonical algebra with a tubular weight sequence $(p_1,\ldots,p_t)$, we shall assume that $2\le p_1\le p_2\le \cdots \le p_t$. Then the extended canonical algebra
$\langle {p_1,\dots,p_t} \rangle$ is derived canonical of type $(p_1,\ldots,p_{t-1},p_t+1)$.

$(iii)$ Negative Euler characteristic: Let $\x$ be a weighted projective line of negative Euler characteristic, let $A= \langle {p_1,\dots,p_t} \rangle$ be the corresponding extended canonical algebra and $R$ be the $\Z$-graded singularity attached to $\x$. Then the derived category $\Der(\mod A)$ is triangle-equivalent to the triangulated category of graded singularities $\Der_{{\rm sing}}^{{\Z}}(R)$ of $R$, where the superscript $\Z$ refers to the grading.

\subsection{Extended canonical algebras of critical type.}

In \cite{LePe4} the extended canonical algebras $A$ with minimal weigth type $(p_1,p_2,\ldots, p_t)$ such that
${\rm Spec \,}(\chi_A(T)) \not\subset \s^1$ were classified. Recall that such an extended canonical algebra satisfies
$\dim_k H^2(A)= t-3$. In particular, if $A$ is accessible then $t=3$. We get the following more precise statement.

\begin{theor}
Let $A$ be an algebra derived equivalent to an extended canonical algebra of
weight ${\bf p}= (p_1,p_2,\ldots,p_t)$. Suppose that $A$ is not derived equivalent to a canonical algebra but $A$ is of cyclotomic type then the following holds:

(a) $A$ belongs to the {\rm Table} below;

(b) suppose that  $B$ is an algebra derived equivalent to an extended canonical algebra of weight ${\bf q}= (q_1,q_2,\ldots,q_t)$ dominated by ${\bf p}$ then $B$ is of cyclotomic type;

(c) $A$ is accessible if and only if $t=3$;

(d) for $t=3$ the algebra $A$ is Calabi-Yau.
\end{theor}
\medskip
Statements (a), (b) and (c) are shown in \cite{LePe4}, as explained in the next paragraph. 
Statement (d) is shown in \cite{LePe6}.

\subsection{} 
Write $A=C[P]$ for a wild canonical algebra $C$, whose module category is derived equivalent to $\coh \, \X$ for a weighted projective line $\X$, 
and $P$ an indecomposable projective $C$-module. Then
$\chi_A=P_C \chi_C$, 
where $P_C$ is the Hilbert-Poincar\'e series of the positively graded algebra
$R(p,\lambda)=\bigoplus\limits^\infty_{n=0}\Hom_C(M,\tau^n_CM)$
and $M$ is a rank one not preprojective $C$-module. Equivalently,
$R(p,\lambda)=\bigoplus\limits^\infty_{n=0}\Hom(\Oo,\tau_\X^n\Oo)$,
where $\Oo$ is the structure sheaf on $\X$. Recall from \cite{Le1}
that in case $k=\C$, we can interpret $R(p,\lambda)$ as an algebra
of entire automorphic forms associated to the action of a suitable
{\em Fuchsian group\/} of the first kind, acting on the upper half
plane $\h_+$.

It is well-known that the $k$-algebra $R=R(p,\lambda)$ is
commutative, graded integral Gorenstein, in particular
Cohen-Macaulay, of Krull dimension two. The complexity of the
surface singularity $R$ is described by the triangulated category
$$
\Der_{\rm sing}^\Z(R)=\frac{\Der^b(\mod^\Z R)}{\Der^b({\rm proj}^\Z R)},
$$
where $\mod^\Z R$ (resp.\ ${\rm proj}^\Z R$)  denotes the
category of finitely generated (resp.\ finitely generated
projective) $\Z$-graded $R$-modules. This category was considered
by Buchweitz \cite{Bu} and Orlov \cite{Or1, Or2}.  It was shown in \cite{LePe4} 
that there exists a tilting object $\bar{T}$ in the
triangulated category $\Der_{\rm sing}^\Z(R)$ whose endomorphism ring is
isomorphic to an extended canonical algebra $C[P]$.

We say that the algebra $R(p,\lambda)$ (and also the weight sequence $p$) is {\em formally $n$-generated\/} if
$$P_C(T)=\frac{\prod\limits^{n-2}_{i=1}(1-T^{c_i})}{\prod\limits^n_{j=1}(1-T^{d_j})}$$
\noindent
for certain natural numbers $c_1,\ldots,c_{n-2}$ and $d_1,\ldots,d_n$, all $\ge 2$. The
algebra $R(p,\lambda)$ (and also the weight sequence $p$) is {\em
formally a complete intersection\/} if $P_C(T)$ is a rational
function $f_1(T)/f_2(T)$, where each $f_i(T)$ is a product of
cyclotomic polynomials. In \cite{LePe4} it was shown  that the following statements are equivalent:

\begin{itemize}
\item[{\rm (a)}] $R(p,\lambda)$ is formally $3$- or $4$-generated;

\item[{\rm (b)}] $R(p,\lambda)$ is formally a complete intersection;

\item[{\rm (c)}] $A$ is of cyclotomic type.
\end{itemize}

Moreover, for $t=3$ the algebra $R(p,\lambda)$ is a graded complete
intersection of the form $k[X_1,\ldots,X_s]/(\rho_3,\ldots,\rho_s)$
where $s=3$ or $4$ and $\rho_3,\ldots,\rho_s$ is a homogeneous
regular sequence. For $k=\C$ the assertion also holds for $t\geq4$
for $R(p,\lambda')$ for a suitable choice of parameters.
$\lambda'=(\lambda'_3,\ldots,\lambda'_t)$.

In case the algebra $R=R(p,\lambda)$ is formally $3$-generated and $t=3$ then $R$ is always a graded complete
intersection of the form $k[X_1,X_2,X_3]/(f)$. In a forthcoming publication \cite{LePe6} we show that the singularity category 
$\Der_{\rm sing}^\Z(R)$ satisfies the CY property, in particular showing statement (d) of the above theorem.

In {\rm Table\/}, the marks $\bulitito$ and
\cuadrito\ refer to the case $k=\C$: those weight sequences marked
by $\bulitito$ or \cuadrito\ correspond to algebras $R(p,\lambda)$
associated to hypersurface singularities, in those cases
$R(p,\lambda)$ is formally $3$-generated. The marks $\bulitito$
correspond to {\em Arnold's $14$ exceptional unimodal
singularities\/} in the theory or singularities of differentiable
maps \cite{Le1}. Among those weight sequences $p=(p_1,p_2,p_3)$ (that
is $t=3$), Arnold's singularities are exactly those rings of
automorphic forms having three generators.
\newpage

\begin{center}
{\tiny
\begin{tabular}{rc|l|l|c}
&Weight sequence\ \ &\ \ Factorization of $\chi_A$\ \ &\ \
Poincar\'e
  series\ \ &\ \ Period of $\phi_A$\ \ \\[3.5pt]
\hline
&&&&\\[-9pt]
$\bulitito$ &$(2,3,7)$ &$\Phi_{42}$ &$(6,14,21)$ $(42)$ &$42$\\[3.5pt]
$\bulitito$ &$(2,3,8)$ &$\Phi_2\cdot \Phi_{10}\cdot \Phi_{30}$
  &$(6,8,15)$ $(30)$ &$30$\\[3.5pt]
$\bulitito$ &$(2,3,9)$ &$\Phi_3\cdot \Phi_{12}\cdot \Phi_{24}$
  &$(6,8,9)$ $(24)$ &$24$\\[3.5pt]
&$(2,3,10)$ &$\Phi_2\cdot \Phi_{16}\cdot \Phi_{18}$
  &$(6,8,9,10)$ $(16,18)$ &$72$\\
\multicolumn{5}{c}{\phantom{\tiny .}\punteada\phantom{\tiny .}}\\
$\bulitito$ &$(2,4,5)$ &$\Phi_2\cdot \Phi_6\cdot \Phi_{30}$
  &$(4,10,15)$ $(30)$ &$30$\\[3.5pt]
$\bulitito$ &$(2,4,6)$ &$\Phi^2_2\cdot \Phi_{22}$
  &$(4,6,11)$ $(22)$ &$22$\\[3.5pt]
$\bulitito$ &$(2,4,7)$ &$\Phi_2\cdot \Phi_9\cdot \Phi_{18}$
  &$(4,6,7)$ $(18)$ &$18$\\[3.5pt]
&$(2,4,8)$ &$\Phi^2_2\cdot \Phi_4\cdot \Phi_{12}\cdot \Phi_{14}$
  &$(4,6,7,8)$ $(12,14)$ &$84$\\
\multicolumn{5}{c}{\phantom{\tiny .}\punteada\phantom{\tiny .}}\\
$\bulitito$ &$(2,5,5)$ &$\Phi_5\cdot \Phi_{20}$
  &$(4,5,10)$ $(20)$ &$20$\\[3.5pt]
$\bulitito$ &$(2,5,6)$ &$\Phi_2\cdot \Phi_8\cdot \Phi_{16}$
  &$(4,5,6)$ $(16)$ &$16$\\[3.5pt]
&$(2,5,7)$ &$\Phi_{11}\cdot \Phi_{12}$
  &$(4,5,6,7)$ $(11,12)$ &$132$\\[3.5pt]
&$(2,6,6)$ &$\Phi^2_2\cdot \Phi_3\cdot \Phi_6\cdot \Phi_{10}\cdot \Phi_{12}$
  &$(4,5,6,6)$ $(10,12)$ &$60$\\
\multicolumn{5}{c}{\phantom{\tiny .}\punteada\phantom{\tiny .}}\\
$\bulitito$ &$(3,3,4)$ &$\Phi_3\cdot \Phi_{24}$
  &$(3,8,12)$ $(24)$ &$24$\\[3.5pt]
$\bulitito$ &$(3,3,5)$ &$\Phi_2\cdot\Phi_3\cdot\Phi_6\cdot\Phi_{18}$
  &$(3,5,9)$ $(18)$ &$18$\\[3.5pt]
$\bulitito$ &$(3,3,6)$ &$\Phi^2_3\cdot\Phi_{15}$
  &$(3,5,6)$ $(15)$ &$15$\\[3.5pt]
&$(3,3,7)$ &$\Phi_2\cdot\Phi_3\cdot\Phi_4\cdot\Phi_{10}\cdot\Phi_{12}$
  &$(3,5,6,7)$ $(10,12)$ &$60$\\[3.5pt]
$\bulitito$ &$(3,4,4)$ &$\Phi_2\cdot\Phi_4\cdot\Phi_{16}$
  &$(3,4,8)$ $(16)$ &$16$\\[3.5pt]
$\bulitito$ &$(3,4,5)$ &$\Phi_{13}$ &$(3,4,5)$ $(13)$ &$13$\\[3.5pt]
&$(3,4,6)$ &$\Phi_2\cdot\Phi_3\cdot\Phi_9\cdot\Phi_{10}$
  &$(3,4,5,6)$ $(9,10)$ &$90$\\[3.5pt]
&$(3,5,5)$ &$\Phi_2\cdot\Phi_5\cdot\Phi_8\cdot\Phi_{10}$
  &$(3,4,5,5)$ $(8,10)$ &$40$\\[3.5pt]
$\bulitito$ &$(4,4,4)$ &$\Phi^2_2\cdot\Phi^2_4\cdot\Phi_6\cdot\Phi_{12}$
  &$(3,4,4)$ $(12)$ &$12$\\[3.5pt]
&$(4,4,5)$ &$\Phi_2\cdot\Phi_4\cdot\Phi_8\cdot\Phi_9$
  &$(3,4,4,5)$ $(8,9)$ &$72$\\
\multicolumn{5}{c}{\phantom{\tiny .}\punteada\phantom{\tiny .}}\\
$\cuadrito$ &$(2,2,2,3)$ &$\Phi^2_2\cdot \Phi_{18}$
  &$(2,6,9)$ $(18)$ &$18$\\[3.5pt]
$\cuadrito$ &$(2,2,2,4)$ &$\Phi^2_2\cdot \Phi_{14}$
  &$(2,4,7)$ $(14)$ &$14$\\[3.5pt]
$\cuadrito$ &$(2,2,2,5)$ &$\Phi^2_2\cdot\Phi_3\cdot\Phi_6\cdot \Phi_{12}$
  &$(2,4,5)$ $(12)$ &$12$\\[3.5pt]
&$(2,2,2,6)$ &$\Phi^2_2\cdot \Phi_8\cdot \Phi_{10}$
  &$(2,4,5,6)$ $(8,10)$ &$40$\\[3.5pt]
$\cuadrito$ &$(2,2,3,3)$ &$\Phi_2\cdot \Phi_3\cdot\Phi_4\cdot \Phi_{12}$
  &$(2,3,6)$ $(12)$ &$12$\\[3.5pt]
$\cuadrito$ &$(2,2,3,4)$ &$\Phi^2_2\cdot\Phi_5\cdot \Phi_{10}$
  &$(2,3,4)$ $(10)$ &$10$\\[3.5pt]
&$(2,2,3,5)$ &$\Phi_2\cdot \Phi_7\cdot \Phi_8$
  &$(2,3,4,5)$ $(7,8)$ &$56$\\[3.5pt]
&$(2,2,4,4)$ &$\Phi^2_2\cdot \Phi_4\cdot \Phi_6\cdot \Phi_8$
  &$(2,3,4,4)$ $(6,8)$ &$24$\\[3.5pt]
$\cuadrito$ &$(2,3,3,3)$ &$\Phi^2_3\cdot \Phi_9$
  &$(2,3,3)$ $(9)$ &$9$\\[3.5pt]
&$(2,3,3,4)$ &$\Phi_2\cdot \Phi_3\cdot \Phi_6\cdot\Phi_7$
  &$(2,3,3,4)$ $(6,7)$ &$42$\\[3.5pt]
&$\xy (0,0) *+[F]{(3,3,3,3)}\endxy$ &$\Phi_2\cdot\Phi^3_3\cdot\Phi^2_6$
  &$(2,3,3,3)$ $(6,6)$ &$\infty$\\[3.5pt]
$\cuadrito$ &$(2,2,2,2,2)$ &$\Phi^4_2\cdot \Phi_{10}$
  &$(2,2,5)$ $(10)$ &$10$\\[3.5pt]
$\cuadrito$ &$(2,2,2,2,3)$ &$\Phi^3_2\cdot \Phi_4\cdot \Phi_8$
  &$(2,2,3)$ $(8)$ &$8$\\[3.5pt]
&$\xy (0,0) *+[F]{(2,2,2,2,4)}\endxy$ &$\Phi^2_2\cdot \Phi_3\cdot\Phi^2_6$
  &$(2,2,3,4)$ $(6,6)$ &$\infty$\\[3.5pt]
&$(2,2,2,3,3)$ &$\Phi^2_2\cdot \Phi_3\cdot\Phi_5\cdot\Phi_6$
  &$(2,2,3,3)$ $(5,6)$ &$30$\\[3.5pt]
&$(2,2,2,2,2,2)$ &$\Phi^5_2\cdot \Phi_4\cdot\Phi_6$
  &$(2,2,2,3)$ $(4,6)$ &$12$
\end{tabular}
}
\vskip12pt
{\bf Table} Weights $p$ with $\rho (\phi_A)=1$.
\end{center}

\section{Algebras of cyclotomic type and the homological quadratic form} 

\subsection{}
Let $A$ be an algebra of cyclotomic type. As observed in \cite{La,LePe3} the transformation $\phi_A$ is not necessarily periodic. 
As we remarked before, $\phi_A$ is periodic if and only if $A$ is of cyclotomic type and $\phi_A$ is diagonalizable.

The following result follows \cite{Ho}, see also \cite{LePe3, Sa}.

\begin{theor}
If $h_A$ is non-negative, then $A$ is of cyclotomic type. Moreover, if $h_A$ is positive definite then 
$\phi_A$ is diagonalizable and periodic.
\end{theor}
\begin{proof}
Let $C$ be any invertible positive upper triangular matrix and 
$h(x)=x^t(C^{-1}+C^{-t})x$, for $x \in K_0(A)$, an associated quadratic form. Assume
$h(x)$ is positive definite, we shall prove that all eigenvalues of $-C C^{-t}$ have modulus one.

Indeed, if $C^{-t}u=-\lambda C^{-1}u$ for an eigenvalue $\lambda$ of $-C C^{-t}$ and a complex vector $u \ne 0$, since $0 < h(u) = {\overline u}^t C^{-t}u = - \lambda {\overline u}^t C^{-1}u$ then
$|| \lambda||=1$. In particular, this shows that $h_A$ positive definite implies $A$ cyclotomic.

Assume that $h_A$ is non-negative. Choose $\epsilon > 0$ such that 
$$C_\epsilon =C_A \sum\limits_{i=0}^\infty (-1)^i (\epsilon C_A)^i$$ 
\noindent
converges. Therefore $C_\epsilon^{-1}=C_A^{-1}(1 +\epsilon C_A)$ and the real quadratic form
$$h_\epsilon(x)=x^t C_\epsilon^{-1}x = x^t C_A^{-1}x + \epsilon x^t x$$
\noindent
is positive definite.

Since $\lim_{\epsilon \to 0}h_\epsilon = h_A$ and the eigenvalues of $h_\epsilon$ have modulus one, for every $\epsilon > 0$, then the same holds for the eigenvalues of $h_A$. That is, $A$ is of cyclotomic type.
 
Assume that $h_A$ is positive definite. Following \cite{How} we show that $\phi_A = -C_A C_A^{-t}$ is diagonalizable. Indeed, suppose, to get a contradiction, that $v$ is an eigenvector of $\phi_A$ with eigenvalue $\lambda$ and $0 \ne u$ is a vector with $\phi_A u - \lambda u = v$. We get

(1) $- C_A^{-t}u -\lambda C_A^{-1}u = C_A^{-1}v$ 

(2) $-C_A^{-t}v = \lambda C_A^{-1}v$.

Therefore, from (2), ${\overline u}^t C_A^{-t}v = - \lambda {\overline u}^t C_A^{-1}v$. Taking transpose and conjugate, this yields, 
${\overline v}^t C_A^{-1}u = - {\overline \lambda} {\overline v}^t C_A^{-t}u$. Since $||\lambda||=1$,
equation (1) implies $h_A(v)= {\overline v}^t C_A^{-t}v =0$, contradicting the positivity of $h_A$.

Finally, $\phi_A$ is equivalent to a diagonal matrix $D$ whose diagonal entries are roots of unity. Thus $\phi_A$ is periodic. 
\end{proof}

\subsection{}
In certain cases the non-negativity of $h_A$ is related to the representation type of $A$, see for example \cite{Pe2} and the next paragraph. On the other hand, there are pairs of algebras $A$ and $B$ with $h_A=h_B$ and such that $A$ is representation tame while $B$ is representation wild.
We show a sequence of such pairs. 

For each number $n \ge 7$, consider the quiver
$$\footnotesize{\xymatrix@R8pt@C8pt{&&5 \ar[r]&\cdots \ar[r]&n-1\ar[rd]&\\Q_n: 1 \ar[r]& 2 \ar[ru] \ar[rrd]&&&&n\\&&&3 \ar[rru]&&\\&&&4 \ar[u]&&}}$$
\noindent
Let $A_n$ be the quotient of the path algebra $kQ_n$ determined by the commutativity relation and $B_n$ the quotient of 
$kQ_n$ where the long path of length $n-4$ from $2$ to $n$ vanishes. Then $h_{A_n}=h_{B_n}$ which is a positive definite
quadratic form. Observe that $A_n$ is representation finite while $B_n$ is a wild algebra.

\subsection{}
Consider the poset

$$\footnotesize{\xymatrix@R8pt{&1 \ar[rd]&&&&\\D_n:  &&3 \ar[r]&4 \ar[r]&\cdots \ar[r]& n \\ &2 \ar[ru]&&&&}}$$

The supercanonical algebra $A=A((1),(1),D_n;1)$ is a {\em pg-critical algebra}, that is a tame algebra which is minimal 
not of polynomial growth. Any pg-critical algebra is derived equivalent to an algebra of the form
$A((1),(1),D_n;1)$, see \cite{LePe1}.
\medskip

A {\em semichain poset} is a full convex subcategory of the poset
$$\footnotesize{\xymatrix@R8pt{&1 \ar[rdd]\ar[r]&2 \ar[rdd]\ar[r]& \cdots&\cdots \, m-1\ar[r]\ar[rdd] &m \ar[rd]&&&\\D(n,m):  &&&&&&2m \ar[r]&\cdots \ar[r]&n  \\ &1' \ar[r]\ar[ruu]&2' \ar[ruu]\ar[r]&\cdots&\cdots \, (m-1)'\ar[r]\ar[ruu] &m' \ar[ru] &&&}}$$

In important cases it is easy to determine the periodicity of the Coxeter transformation. Recall
that a supercanonical algebra $A=A(S_1,\ldots,S_t;\,\lambda_3,\ldots,\lambda_t)$ is said to be
{\em of Dynkin class} if each poset $S_i$ is of Dynkin type. In that case, let $p(S_i)$ be the period of $\phi_{S_i}$. 

\begin{theor}
Let $A=A(S_1,\ldots,S_t;\,\lambda_3,\ldots,\lambda_t)$ be a
supercanonical algebra of Dynkin class. Let $p= lcm(p(S_1),\ldots,p(S_t))$ and $B=A/(\alpha)$ be the algebra obtained by deleting the source 
$\alpha$.

The following are equivalent:

\begin{itemize}
\item[{\rm (a)}] $\phi_A$ is periodic;

\item[{\rm (b)}] $\phi_A^p=1$ and $p$ is the exact period of $\phi_A$;

\item[{\rm (c)}] $h_A$ is non-negative and $\rad \,h_A$ has rank two;

\item[{\rm (d)}] $h_B$ is non-negative and $\rad \, h_B$ has rank one.
\end{itemize}
\end{theor}

\section{On the structure of Auslander-Reiten components of algebras of cyclotomic type}

\subsection{}
Let $A$ be an  algebra of cyclotomic type and $\chi_A=\prod_{m \in M}\Phi_m^{e(m)}$ be an irreducible decomposition of its Coxeter polynomial.

Consider the bounded derived category $\Der(A)=\Der^b(\mod A)$ and let $\Gamma= \Gamma_{\Der(A)}$ be the corresponding Auslander-Reiten quiver. Recall that for any class $[X] \in K_0(\Der(A))$ and $X \to Y \to Z \to T(X)$ an Auslander-Reiten triangle then $[T(X)]= \phi_A([X])$.

\begin{theor}
Every component of $\Gamma$ is either a tube $\Z \A_\infty / (p)$ of finite period $p$ where $p=\phi(m)$ for some $m \in M$
or of the form $\Z \Delta$ for $\Delta$ a Dynkin or extended Dynkin diagram or of one of the shapes $\A_\infty$, $\A_\infty^\infty$ or $\D_\infty$.
\end{theor}
\begin{proof}
By \cite{Zh}, every component is either a tube or of the shape $\Z \Delta$ for some infinite quiver $\Delta$ 
which is locally finite and without oriented cycles. 

Consider the case of a component $\mathcal C$ of shape $\Z\Delta$ in $\Gamma$. We will show that a growth argument restricts the possible shapes of $\Delta$. We need some preparation. Fix a vertex $x_0$ in $\mathcal C$ with corresponding class $X_0 \in K_0(\Der(A))$. For each $n \in \N$ the Auslander-Reiten translation $x_n= T^n(x_0)$ with corresponding class $X_n \in K_0(\Der(A))$.
Consider the function
$$c(n)=\sum\limits_{i \in Q_0} [X_n](i)$$
\noindent 
which is the restriction of an additive function on the subcategory ${\mathcal C}(x_0)$ of $\mathcal C$ formed by the predecessors of $x_0$. Consider the complete section $\overline \Delta$ with $x_0$ as unique sink and $c_0: {\overline \Delta} \to \Z$ the vector obtained as restriction of  the function $c$ to $\overline \Delta$. Then $c(n)=\phi_{\overline \Delta}^n(c_0)(x_0)$.

In case $\Delta$ is not one of the indicated graphs, the growth of $(\phi_{\overline \Delta}^n)_{n \in \N}$ is exponential, as it is shown in 
\cite{Zh}. This completes the claim on the shape of $\Delta$.

Let $\mathcal C$ be a tubular component of shape $\Z\A_\infty /(p)$ in $\Gamma$. Let $X_1,X_2,\ldots,X_p$ be objects on the mouth of $\mathcal C$. Let $V$ be the subspace of $K_0(\Der(A)) \otimes \C$ generated by the $X_i$. Then $V$ is a $\phi_A$-invariant subspace. Assume  $K_0(\Der(A)) \otimes \C= V \oplus W$ then $\phi_A$ takes the form
$$\begin{pmatrix} \phi_1 & 0\\ * &  \phi_2 \end{pmatrix}$$
\noindent
Then $\chi_A=\chi_{\phi_1}\chi_{\phi_2}$ and since $\phi_1$ is an irreducible matrix then
$\chi_{\phi_1}$ is an irreducible factor of $\chi_A$. That is $\chi_{\phi_1}=\Phi_m$ for some $m \in M$. Then $p=\phi(m)$.
\end{proof}

\subsection{}
Let $A$ be a triangular algebra and $\Gamma$ the Auslander-Reiten quiver of its derived category $\Der(A)$. The {\em class quiver} $[\Gamma]$ of $A$ is formed by the quotients $[{\mathcal C}]$ of components $\mathcal C$ of $\Gamma$ obtained by identifying $X, Y \in {\mathcal C}$ if $[X]=[Y]$ in
$K_0(\Der(A))$. Observe that, according to \cite{HPR}, $[{\mathcal C}]={\mathcal C}$ for any component of $\Gamma$ with the possible exception of $[\Z\A_\infty]=\Z \A_\infty/(p)$ for some $p$.

\begin{theor}
Let $A$ be a triangular algebra and $\Gamma$ the Auslander-Reiten quiver of its derived category $\Der(A)$. Then the following holds: 

(a) if $A$ satisfies the CY property then $\phi_A$ is periodic;

(b) every component of $[\Gamma]$ is a tube if and only if  $\phi_A$ is periodic. 

If (b) holds then,

(c) $A$ is of cyclotomic type, and

(d) if $p_1,\ldots,p_s$ are the periods $> 1$ of non-homogeneous tubes in $[\Gamma]$ then $\sum\limits_{i=1}^s p_i \le n$.
\end{theor}
\begin{proof}
Let $\mathcal C$ be a tubular component of shape $\Z\A_\infty /(p)$ in $[\Gamma]$. As above we may prove that there is an irreducible factor $f_p(T)$ of $\chi_A(T)$ of degree $p$.

Assume that every component of $[\Gamma]$ is a tube. Consider a maximal set $X_1,\ldots,X_s$ in $\Der(A)$ satisfying that 

(i) $[X_i]$ lies on the mouth of a tube ${\mathcal T}_i$ in $[\Gamma]$ with period $p_i$ and  $f_{p_i}(T)$ is an irreducible factor of $\chi_A(T)$ of degree $p_i$;

(ii) $[X_j]$ does not lie on ${\mathcal T}_i$ for $j \ne i$;

(iii) $\prod_{i=1}^s f_{p_i}$ is a factor of $\chi_A(T)$.

Clearly, $\sum\limits_{i=1}^s p_i =n$. In particular, $\{\phi_A^m([X_i]): 1 \le i \le s, 0 \le m < p_i\}$ is a basis of $K_0(\Der(A))$. Therefore $\phi_A$ is periodic which implies that $A$ is of cyclotomic type.

Conversely, assume that $\phi_A$ is periodic. 
Let $\mathcal C$ be a component of shape $\Z\Delta$ in $\Gamma$. Observe that the classes $[X]$ 
for $X \in {\mathcal C}$ are periodic. Hence by \cite{HPR}, $[{\mathcal C}]$ is a tube, that is, $\Delta$ has the shape $\Z \A_\infty$.
\end{proof}

{\footnotesize
\noindent
Centro de Investigaci\'on en Matem\'aticas, A.C.\\
Guanajuato 36240 M\'exico\\
\hbox{\hskip.18cm} {\footnotesize\em e-mail:\/} jap@cimat.mx \\
and\\
Instituto de Matem\'aticas, UNAM. Cd. Universitaria, \\
M\'exico 04510 D.F.\\
\hbox{\hskip.18cm} {\footnotesize\em e-mail:\/} jap@matem.unam.mx
}
\end{document}